\theoremstyle{plain}
\newtheorem{thm}{Theorem}[section]
\newtheorem{theorem}[thm]{Theorem}
\newtheorem{proposition}[thm]{Proposition}
\theoremstyle{definition}
\newtheorem{remark}[thm]{Remark}
\newtheorem{definition}[thm]{Definition}
\newtheorem{claim}[thm]{Claim}
\newtheorem{question}[thm]{Question}
\numberwithin{equation}{section}
\newcommand{\sO}{{\mathcal O}}
\newcommand{\C}{{\mathbb C}}
\newcommand{\BP}{{\mathbb P}}
\newcommand{\Q}{{\mathbb Q}}
\newcommand{\R}{{\mathbb R}}
\newcommand{\Z}{{\mathbb Z}}
\title [A queston of Doctor Malte Wandel]{A question of Doctor Malte Wandel 
on automorphisms of the punctual Hilbert schemes of K3 surfaces} 
\author{Keiji Oguiso} 
\address{Department of Mathematics, Osaka University, Toyonaka 560-0043, Osaka, Japan and Korea Institute for Advanced Study, Hoegiro 87, Seoul, 133-722, Korea}
 \email{oguiso@math.sci.osaka-u.ac.jp}   
\thanks{The author is supported by JSPS Grant-in-Aid (S) No 25220701,
JSPS Grant-in-Aid (S) No 22224001, JSPS Grant-in-Aid (B) No 22340009, and by KIAS Scholar Program.}
\dedicatory{Dedicated to Professor Doctor Fabrizio Catanese on the occasion of his sixty-fifth birthday.}
\begin{document}

\maketitle

\begin{abstract} We present a sufficient condition for the punctural Hilbert scheme of length two of a K3 surface with finite automorphism group to have automorphism group of infinite order in geometric terms (Theorem \ref{geometric}). 
We then give concrete examples (Theorem \ref{main}). We also discuss about Mori dream space (MDS) structures under an extermal crepant resolution (Theorems 
\ref{main}, \ref{mds}, \ref{elementary}) from the viewpoint of automorphisms. 
These 
affirmatively answer a question of Doctor Malte Wandel.
\end{abstract}

\section{Introduction}

Thoughout this note, we work over the complex number field $\C$. We call a $\C$-valued point $P$ of a variety $V$ simply a point and denote by $P \in V$ instead of $P \in V(\C)$. Topology of a complex variety is assumed to be the Euclidean topology of the underlying analytic space. We denote the group of biregular automorphisms of $V$ by ${\rm Aut}\, (V)$ and the group of birational automorphisms of $V$ by ${\rm Bir}\, (V)$. 

Let $S$ be a smooth projective surface. We denote by $S^{[n]}$ the Hilbert scheme ${\rm Hilb}^n(S)$ of $0$-dimensional closed subschemes of length $n \ge 2$ on $S$ and by $S^{(n)}$ the Chow variety $S^n/\Sigma_n$ of $0$-dimensional cycles of length $n \ge 2$ on $S$. Here $S^n$ is the $n$-times self-product of $S$ 
and $\Sigma_n$ is the symmetric group of $n$-letters which acts on $S^n$ by $(P_i)_{i=1}^{n} \mapsto (P_{\sigma(i)})_{i=1}^{n}$. 

Let $S$ be a projective K3 surface, i.e., a smooth, simply-connected projective surface $S$ with $\sO_S(K_S) \simeq \sO_S$. Here $K_S$ is a canonical divisor of $S$. Then $S^{[n]}$ is a projective hyperk\"ahler manifold, i.e., a smooth, simply-connected projective variety with everywhere non-degenerate global regular $2$-form $\sigma_{S^{[n]}}$ such that $H^0(S^{[n]}, \Omega_{S^{[n]}}^2) = \C \sigma_{S^{[n]}}$ (\cite{Fu83}, \cite{Be84}). 

The aim of this note is to give an affirmative answer to the following question asked by Doctor Malte Wandel (Theorem
\ref{main}, See also Theorems \ref{geometric}, \ref{elementary}, \ref{mds}):

\begin{question}\label{wandel}
Is it possible that $\vert {\rm Aut}\, (S^{[2]}) \vert = \infty$ for a projective K3 surface $S$ with $\vert {\rm Aut}\, (S) \vert < \infty$?
\end{question}

Note that ${\rm Aut}\, (S)$ naturally and faithfully acts on $S^{[n]}$. Hence $\vert {\rm Aut}\, (S^{[n]}) \vert = \infty$ if $\vert {\rm Aut}\, (S) \vert = \infty$. 

Throughout this note, we denote by $\Lambda$ the even hyperbolic lattice of rank $2$ and of discriminant $17$, defined by:
$$\Lambda := (\Z^2 = \Z e_1 + \Z e_2, ((e_i,e_j)) = \left(\begin{array}{rr}
2 & 5\\
5 & 4\\
\end{array} \right))\,\, , \,\, |{\rm det}\, \left(\begin{array}{rr}
2 & 5\\
5 & 4\\
\end{array} \right))| = 17 .$$
Here, by a lattice $L = (L, (*,**))$, we mean a pair of a free $\Z$-module $L$ of finite rank $r$ and a non-degenerate $\Z$-valued symmetric bilinear form $(*,**)$ on $L$. $L$ is called hyperbolic if $(*,**) \otimes \R$ is 
of singnature $(1, r-1)$, and even if $(x, x) \in 2\Z$ for all $x \in L$. Lattice isomorphisms are defined by an obvious manner. The N\'eron-Severi group ${\rm NS}\, (S)$ of a projective K3 surface $S$ is an even hyperbolic lattice of rank $\rho(S)$, the Picard number of $S$, with respect to the intersection paring $(*, **)$ (See eg. \cite[Chap.VIII]{BHPV}). 

Our main result is the following:
 
\begin{theorem}\label{main}
Let $S$ be a necessarily projective K3 surface such that ${\rm NS}\, (S) \simeq \Lambda$. Then:

(1) $\vert {\rm Aut}\, (S^{[2]}) \vert = \infty$ and $\vert {\rm Aut}\, (S) \vert < \infty$. 

(2) The Hilbert-Chow morphism 
$\mu : S^{[2]} \to S^{(2)}$
is an extremal crepant resolution such that the source $S^{[2]}$ is not a Mori dream space, but the target $S^{(2)}$ is a Mori dream space.
\end{theorem}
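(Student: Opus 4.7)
The plan is to establish (1) via arithmetic on $\Lambda$ together with the Torelli theorem for hyperk\"ahler manifolds, and to deduce (2) by combining the conclusion of (1) with the quotient presentation of $S^{(2)}$.

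For finiteness of ${\rm Aut}\,(S)$, I would verify by direct computation that $\Lambda$ represents $-2$ but not $0$. Writing $v = ae_1 + be_2 \in \Lambda$, the isotropy equation $v^2 = 0$ reduces to $a^2 + 5ab + 2b^2 = 0$, whose discriminant $17$ is not a square, so the only isotropic vector is $0$; meanwhile $v = -e_1 + 2e_2$ satisfies $v^2 = 2 - 20 + 16 = -2$. By the classical criterion for rank-$2$ K3 Picard lattices (Pjatetskii-Shapiro--Shafarevich, Nikulin), $|{\rm Aut}\,(S)| < \infty$ precisely when ${\rm NS}\,(S)$ represents $0$ or $-2$; since $\Lambda$ represents $-2$, we conclude $|{\rm Aut}\,(S)| < \infty$.

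For the infiniteness of ${\rm Aut}\,(S^{[2]})$, I would use the Beauville-Bogomolov-Fujiki splitting ${\rm NS}\,(S^{[2]}) \simeq \Lambda \oplus \Z\delta$ with $\delta^2 = -2$ and $2\delta$ the class of the Hilbert-Chow exceptional divisor. This rank-$3$ lattice has signature $(1,2)$, so its orthogonal group is an arithmetic subgroup of $O(1,2)$ containing hyperbolic elements of infinite order; the task is to exhibit such an element whose action preserves the K\"ahler cone and then lift it to a biregular automorphism via the Verbitsky-Markman Torelli theorem. Concretely, I would invoke Theorem \ref{geometric} as the key input, which packages a sufficient geometric/lattice-theoretic condition, and verify its hypotheses for this specific $\Lambda$ by analyzing the wall structure of the movable cone via Markman's classification of wall-divisors of $K3^{[2]}$-type (the $(-2)$-classes and certain $(-10)$-prime exceptional classes). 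The main obstacle will be controlling these walls: the candidate infinite-order isometry must preserve not merely the positive cone but the ample chamber itself, which requires showing that wall-divisor classes are permuted compatibly with the chamber structure, so that the lifted birational transformation is in fact biregular.

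For part (2), extremality and crepancy of $\mu$ are standard: both $S^{[2]}$ and $S^{(2)}$ have trivial dualizing sheaf, and $\mu$ contracts the single irreducible exceptional divisor $E = 2\delta$ (a $\mathbb{P}^1$-bundle over $S$), realizing an extremal ray of $\overline{{\rm NE}}\,(S^{[2]})$. For $S^{[2]}$ not being an MDS, I would use that a projective hyperk\"ahler manifold with infinite birational automorphism group cannot be MDS (an MDS has rational polyhedral movable cone with finitely many birational chambers permuted by $\rm Bir$ with finite stabilizer by Torelli); since ${\rm Aut}\,(S^{[2]}) \subset {\rm Bir}\,(S^{[2]})$ is infinite by (1), $S^{[2]}$ is not MDS. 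For $S^{(2)}$ being an MDS, I would invoke the Artebani--Hausen--Laface criterion that a K3 surface is an MDS iff its automorphism group is finite, concluding from (1) that $S$ is an MDS; then $S \times S$ is MDS as a product of MDS, and $S^{(2)} = (S \times S)/\Sigma_2$ is MDS as the finite-group quotient of an MDS.
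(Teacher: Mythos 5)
Your handling of part (2) and of the finiteness of ${\rm Aut}\,(S)$ is essentially correct and runs parallel to the paper: the paper likewise shows $S^{[2]}$ is not a MDS by noting that a rational polyhedral nef cone would yield an ${\rm Aut}$-invariant ample class and hence a finite automorphism group, and likewise obtains the MDS property of $S^{(2)}$ from that of $S\times S$ (rational polyhedral nef cone via the cone conjecture for K3 surfaces with finite automorphism group, nef implies semi-ample) followed by Okawa/B\"aker on quotients; your quadratic-form computation showing $\Lambda$ represents $-2$ but not $0$ matches the paper's Claims on $\Lambda$.

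The genuine gap is in the claim $\vert {\rm Aut}\,(S^{[2]})\vert = \infty$, which is the heart of the theorem: you never actually produce an infinite-order automorphism, and you oscillate between two strategies without completing either. If you invoke Theorem \ref{geometric}, what must be verified are its hypotheses (2) and (3) on the surface $S$ itself, namely two distinct very ample classes $H_1,H_2$ of square $4$ whose quartic models in $\BP^3$ contain no line; Markman's wall-divisor theory for $K3^{[2]}$ is not the tool for this. The paper's verification is elementary: writing ${\rm NS}(S)=\Z L+\Z H$ with $(L,L)=2$, $(H,H)=4$, $(L,H)=5$, one shows $\vert L\vert$ defines a double cover of $\BP^2$ with covering involution $\tau$, computes $\tau^*H=5L-H$, sets $H_1=H$, $H_2=\tau^*H$, proves very-ampleness via Saint-Donat's criterion, identifies the only smooth rational curve classes as $-L+2H$ and $9L-2H$ via the discriminant constraint $k^2+4=17l^2$, and checks $(H,-L+2H)=3$ and $(H,9L-2H)=37$, so no line. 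None of this appears in your proposal. If instead you pursue the Torelli route, you yourself flag the decisive step---exhibiting an integral Hodge isometry of infinite order preserving the ample chamber---as ``the main obstacle'' and leave it unresolved; note that the mere existence of hyperbolic elements in the arithmetic group $O({\rm NS}(S^{[2]}))$ is not sufficient, since for $\rho(S)=1$ the lattice ${\rm NS}(S^{[2]})$ also has infinite orthogonal group while ${\rm Aut}(S^{[2]})$ is finite. The paper resolves this constructively: the two Beauville involutions $\iota_1,\iota_2$ attached to $H_1,H_2$ are biregular precisely because the quartic models contain no line, and the explicit matrix of $\iota_1^*\iota_2^*$ on ${\rm NS}(S^{[2]})$ has an eigenvalue $\beta>1$, so $\iota_2\iota_1$ has infinite order.
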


See also Theorem \ref{geometric} in Section 2 for a geometric criterion related to Theorem \ref{main} (1). The essential point of the proof of Theorem \ref{main} (1) is as follows. $S$ in Theorem \ref{main} has at least two different embeddings into $\BP^3$. The images are smooth quratic surfaces with no line. From this, we see that $S^{[2]}$ has at least two different biregular involutions, called Beauville's involutions (\cite[Sect. 6]{Be83}), corresponding to these two embeddings. See Sections 2 and 3 for details. 

The notion of Mori dream space (MDS) was introduced by \cite[Def. 11]{HK00}. See also Definition \ref{md} in Section 5. Theorem \ref{main} (2) was kindly suggested by Professor Shinnosuke Okawa. See also Theorem \ref{mds} and Theorem \ref{elementary} in Sections 5 and 4 for slightly more general results related to Theorem \ref{main} (2). 

\begin{remark}\label{moduli}
K3 surfaces $S$ in Theorem (\ref{main}) form a dense subset of the $18$-dimensional family of $\Lambda$-polarized K3 surfaces (\cite{Do96}, See also \cite[Cor. 2.9]{Mo84} for the existence). 
\end{remark}

\begin{remark}\label{picardnumber}

(1) If $S$ is a K3 surface with $\rho(S) = 1$, then both ${\rm Aut}\, (S^{[n]})$ and ${\rm Bir}\, (S^{[n]})$ are finite groups 
(See eg. \cite[Cor. 5.2]{Og12}). By the definition, $\rho(S) = 2$ for K3 surfaces $S$ in Theorem \ref{main}.  
Thus, $\rho(S) =2$ is the {\it smallest} Picard number for Question (\ref{wandel}) to be affirmative.

(2) Theorem \ref{geometric} in Section 2 and our proof of Theorem \ref{main} (1) in Section 3 suggest that there will be many other ways to construct examples similar to the ones in Theorem \ref{main}. 
Compare with \cite[Sect. 3]{MN02}. 
\end{remark}

We close Introduction by recalling the following question of Professor Alessandra Sarti:

\begin{question}\label{sarti}
Is ${\rm Aut}\, (X^{[n]}) = {\rm Aut}\, (X)$
for any projective K3 surface $X$ and any $n \ge 3$?
\end{question}

\begin{remark}\label{yoshioka}
If Question \ref{sarti} would be affirmative for our $S$ in Theorem \ref{main}, then ${\rm Aut}\, (S^{[n]}) = \infty$ only when $n = 2$. A  work of Markman and Yoshioka (\cite{MY14}) might be of 
some help to check this. Professor Brendon Hessett also kindly informed me that he found a counterexample for Question \ref{sarti} in $n=3$. 
\end{remark}

{\bf Ackowledgement.} I would like to express my best thanks to Doctor Malte Wandel for his question and to Professor Shinnosuke Okawa for discussions at the 
conference, 
"Arithmetic and Algebraic Geometry" held at University of Tokyo, January 2014, from which this note was grown up. I would like to express my sincere thanks to Professors Alice Garbagnati, Fabrizio Catanese, H\'el\`ene Esnault, Brenton Hassett, Yujiro Kawamata, Thomas Peternell and Alessandra Sarti 
for valuable suggestions, discussions and encouragement. I would like to express my thanks to Professors Toshiyuki Katsura, Iku Nakamura and Tomohide Terasoma for invitation to the above mentioned conference. It is my great honnor to dedicate this note to Professor Doctor Fabrizio Catanese on the occasion of his sixty-fifth birthday, from whom I have learned much about mathematics since 1993. 

\section{A geometric criterion}

In this section, we shall prove the following theorem. Our proof of Theorem \ref{main} (1) will be reduced to this theorem in Section 3:

\begin{theorem}\label{geometric}
Let $S$ be a projective K3 surface of Picard number $\rho(S) = 2$ such that:

(1) $S$ has either a smooth elliptic curve or a smooth rational curve; 

(2) $S$ has very ample divisors $H_1$, $H_2$ such that 
$$(H_1,H_1) = (H_2,H_2) = 4\,\, , \,\, H_1 \not= H_2$$
 in ${\rm NS}\, (S)$; and 

(3) $S$ has no smooth rational curve $C_k$ such that $(C_k,H_k) = 1$ for each $k =1$, $2$.

Then $\vert {\rm Aut}\, (S) \vert < \infty$ and $\vert {\rm Aut}\, (S^{[2]}) \vert = \infty$.
\end{theorem}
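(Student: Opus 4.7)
\noindent\emph{Proof plan.} My plan splits the argument into two parts. For the finiteness of $\text{Aut}(S)$, I would analyze the action on $\text{NS}(S)$: the kernel of $\text{Aut}(S) \to O(\text{NS}(S))$ is finite (polarized K3 autos form a finite group), so it suffices to show the image $G(S) \subset O(\text{NS}(S))$, which stabilizes $\text{Nef}(S)$, is finite. Condition (1) gives two subcases. If $S$ carries a smooth elliptic curve $E$, then $E^2 = 0$, so $\text{NS}(S)$ contains a primitive isotropic vector, and a rank-$2$ hyperbolic lattice with an isotropic vector has only finitely many primitive isotropic classes. Any isometry permutes them and is thus determined by this action, so $O(\text{NS}(S))$ itself is finite. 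If $S$ carries a smooth rational curve $C$ (with $C^2 = -2$), then one extremal ray of $\text{Nef}(S)$ lies on $C^{\perp} \cap \overline{C^{+}}$, where $C^{+}$ is the positive cone. Assuming $\text{NS}(S)$ has no isotropic vector (the remaining subcase being handled above), this ray is spanned by a class of strictly positive square. Since the fixed eigendirections of any hyperbolic element of $O(\text{NS}(S) \otimes \R)$ are isotropic, no such element can preserve this ray, and $G(S)$ must be finite.

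For the infinitude of $\text{Aut}(S^{[2]})$, I would use Beauville's construction. By condition (2), each $|H_k|$ embeds $S$ as a smooth quartic surface in $\BP^3$, and condition (3) excludes lines from this quartic (a line would yield a smooth rational $C_k$ with $(C_k, H_k) = 1$). Hence the Beauville involution $\iota_k \in \text{Aut}(S^{[2]})$, sending a length-$2$ subscheme $Z$ to the residual subscheme of $\ell_Z \cap S$ on the line $\ell_Z \subset \BP^3$ spanned by $Z$, is well-defined (\cite[Sect.~6]{Be83}). The key step is to show that $\iota_1 \iota_2$ has infinite order in $\text{Aut}(S^{[2]})$. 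Using the Beauville-Bogomolov-Fujiki identification $\text{NS}(S^{[2]}) \simeq \text{NS}(S) \oplus \Z \delta$ with $\delta^2 = -2$ and $\delta \perp \text{NS}(S)$, each $\iota_k$ acts as the reflection in $R_k := H_k - \delta$, a class of square $R_k^2 = 2$. Since $H_1 \neq H_2$ both have square $4$ and are both ample, they are non-proportional in the hyperbolic $\text{NS}(S)$, so the Hodge index theorem yields $(H_1, H_2)^2 > H_1^2 \cdot H_2^2 = 16$, forcing the positive integer $(H_1, H_2) \geq 5$ and hence $(R_1, R_2) = (H_1, H_2) - 2 \geq 3$. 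The Gram matrix of $\{R_1, R_2\}$ then has negative determinant, so $\text{span}(R_1, R_2) \subset \text{NS}(S^{[2]}) \otimes \R$ has signature $(1,1)$, and on this plane $\iota_1 \iota_2$ has determinant $1$ and trace $(R_1, R_2)^2 - 2 \geq 7$. This makes it a hyperbolic transformation with distinct real eigenvalues $\lambda, \lambda^{-1}$, $\lambda > 1$, of infinite order.

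The main technical obstacle I expect is to pin down the action of the Beauville involution $\iota_k$ on $\text{NS}(S^{[2]})$ as the reflection in $H_k - \delta$, since the argument depends on the precise normalization of the exceptional class $\delta$. I would verify this formula either by direct appeal to \cite[Sect.~6]{Be83} or by a short computation with the line geometry in $\BP^3$ and the fixed locus of $\iota_k$ (bitangent and flex tangent lines of the quartic).
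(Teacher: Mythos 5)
Your proposal is correct and follows essentially the same route as the paper: the two quartic embeddings give biregular Beauville involutions $\iota_1,\iota_2$, and under ${\rm NS}\,(S^{[2]})={\rm NS}\,(S)\oplus\Z e$ the product $\iota_1^*\iota_2^*$ restricted to the plane spanned by $H_1-e$, $H_2-e$ has determinant $1$ and trace $((H_1,H_2)-2)^2-2\ge 7$, hence a real eigenvalue $>1$ and infinite order, exactly as in the paper's characteristic-polynomial computation. The only differences are cosmetic: you prove $\vert{\rm Aut}\,(S)\vert<\infty$ directly from the rational boundary ray of the nef cone where the paper cites \cite[Prop.~2.4]{Og12}, and Debarre's formula makes $\iota_k^*$ the \emph{anti}-involution $x\mapsto -x+(x,H_k-e)(H_k-e)$ rather than the reflection you name, a sign that cancels in the product and so does not affect your trace.
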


\begin{proof} By (1), the nef cone $\overline{{\rm Amp}}\, (S) \subset {\rm NS}\, (S) \otimes \R$ has at least one boundary ray defined over $\Q$. Since $\rho(S) = 2$, this implies $\vert {\rm Aut}\, (S) \vert < \infty$ (see eg. \cite[Prop. 2.4]{Og12}). 

By (2), we have 
two embeddings
$$\Phi_k := \Phi_{\vert H_k \vert} : S \rightarrow \BP^3\,\, 
$$
associated to the complete linear systems $|H_k|$ ($k = 1$, $2$).
Let $S_k := \Phi_k(S)$. Then $S_k \subset \BP^3$ is a smooth quartic surface by (2) and $S_k$ contains no line by (3). 

Let $P, Q \in S$ be general points of $S$ and $k \in \{1, 2\}$. Then, the projective line $\Phi_k(P)\Phi_k(Q)$ in $\BP^3$ meets $S_k$ in four points, namely, $\Phi_k(P)$, $\Phi_k(Q)$, $\Phi_k(R)$, $\Phi_k(S)$. The correspondence $\{P, Q\} \mapsto \{R, S\}$ then defines a birational automorphism of $S^{[2]}$ of order $2$: 
$$\iota_k : S^{[2]} \cdots \to S^{[2]}\,\, .$$
$\iota_k$ is called the {\it Beauville involution} with respect to $H_k$ (\cite[Prop. 11]{Be83}). As $S_k \subset \BP^3$ contains no projective line, $\iota_k$ is biregular, i.e., $\iota_k \in {\rm Aut}\, (S^{[2]})$ (\cite[ibid.]{Be83}). 

Recall that ${\rm NS}\, (S^{[2]})$ is a hyperbolic lattice of rank $3 = \rho(S) +1$ with respect to the Beauville-Bogomolov form (\cite[Prop. 6]{Be84}). More strongly, we have the following natural identification as lattices (\cite[ibid.]{Be84}):  
$${\rm NS}\, (S^{[2]}) = {\rm NS}\, (S) \oplus \Z e\,\, , \,\, (e,e) = -2\,\, 
.$$ 
Here $e = E/2$ for the exceptional divisor $E$ of the Hilbert-Chow morphism   
$\nu : S^{[2]} \rightarrow S^{(2)}$ (\cite[ibid.]{Be84}). 
Note that $H_1$ and $H_2$ are linearly independent in ${\rm NS}\, (S)$. Hence we have
$${\rm NS}\, (S^{[2]}) \otimes \Q = 
\Q \langle H_1 -e, H_2 -e, e \rangle\,\, .$$

Consider the action of the Beauville involution $i_k$ on 
${\rm NS}\, (S^{[2]})$:
$$\iota_k^* : {\rm NS}\, (S^{[2]}) \rightarrow {\rm NS}\, (S^{[2]})\,\, .$$ 
This is the {\it anti-involution} with respect to the invariant vector 
$H_k -e$:
$$\iota_k^{*} (x) = -x + (x, H_k -e)(H_k -e)\,\, .$$
Here we note that $(H_k -e, H_k-e) = 2$. 
This formula is due to Debarre (\cite[Thm. 4.1]{De84}, see also \cite[Prop. 4.1]{OGr05}). 
Then, with respect to the $\Q$-basis 
$\langle H_1 -e, H_2 -e, e \rangle$ of ${\rm NS}\, (S^{[2]}) \otimes \Q$, we have the following matrix representations of $\iota_k^*$:
$$\iota_1^* = \left(\begin{array}{rrr}
1 & m-2 & 2\\
0 & -1 & 0\\
0 & 0 & -1\\
\end{array} \right)\,\, , \,\, \iota_2^* = \left(\begin{array}{rrr}
-1 & 0 & 0\\
m-2 & 1 & 2\\
0 & 0 & -1\\
\end{array} \right)\,\, .$$
Here we put $m := (H_1,H_2)$. As $m$ is a positive integer and 
$$m^2 = (H_1,H_2)^2 > (H_1,H_1)(H_2,H_2) = 4^2$$
by the Hodge index theorem, it follows that $m \ge 5$. 
Observe then that 
$$(\iota_2\iota_1)^* = \iota_1^*\iota_2^* = \left(\begin{array}{rrr}
(m-2)^2-1 & m-2 & 2m-6\\
-(m-2) & -1 & -2\\
0 & 0 & 1\\
\end{array} \right)\,\, .$$
Let $F(t)$ be the characteristic polynomial of $(\iota_2\iota_1)^*$. 
Then
$$F(t) = f(t)(t-1)\,\, ,$$
where 
$$f(t) = t^2 - ((m-2)^2 -2)t + 1\,\, .$$ 
By $m \ge 5$, we have
$$(m-2)^2 - 2 \ge (5-2)^2 -2 \ge 7\,\, ,\,\, 
D := ((m-2)^2 -2)^2 - 4 > 0\,\, .$$ 
Hence the quadratic polynomial $f(t)$ has two distinct real roots. We denote 
them by $\alpha < \beta$. As
$$2\beta > \alpha + \beta = (m-2)^2 -2 \ge 7\,\, ,$$
it follows that $\beta > 1$. Let $n$ be an integer. Then $\beta^n$ is an eigenvalue of $((\iota_2\iota_1)^*)^n$. As $\beta > 1$, it follows that $n=0$ if $\beta^n = 1$. 
Thus $(\iota_2\iota_1)^*$ is of infinite order. Hence $\iota_2\iota_1 \in {\rm Aut}\, (S^{[2]})$ 
is of infinite order as well. This completes the proof of Theorem \ref{geometric}. 
\end{proof}

\section{Proof of Theorem (\ref{main})(1)}

In this section, we shall prove Theorem (\ref{main}) (1).  
We proceed by dividing into several steps. Set
$${\rm NS}\, (S) = \Z \langle L, H \rangle\,\, ,\,\, (L,L) = 2\,\, ,\,\, (H,H) = 4\,\, ,\,\, (L,H) = 5\,\, .$$

\begin{claim}\label{elliptic}
There is no $C \in {\rm NS}\, (S) \setminus \{0\}$ such that $(C,C) = 0$. In particular, $S$ has no curve $C$ with arithmetic genus $p_a(C) = 1$. 
\end{claim}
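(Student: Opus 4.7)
The plan is to unwind the claim into an elementary statement about the binary quadratic form given by the intersection pairing on $\mathrm{NS}(S)$, and then to bring the adjunction formula on a K3 surface into play for the second assertion.

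First, I would parametrize. Write a general class as $C = aL + bH$ with $a, b \in \Z$. Using the given intersection numbers $(L,L)=2$, $(H,H)=4$, $(L,H)=5$, the self-intersection is
\[
(C,C) = 2a^2 + 10ab + 4b^2 = 2(a^2 + 5ab + 2b^2).
\]
So the task reduces to showing that the integral form $q(a,b) := a^2 + 5ab + 2b^2$ is anisotropic over $\Z \setminus \{(0,0)\}$.

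Next I would argue anisotropy by the discriminant. The discriminant of $q$, viewed as a quadratic in $a$, is $25b^2 - 8b^2 = 17b^2$. If $q(a,b)=0$ had a non-trivial integer solution, then $b\neq 0$ (else $a=0$), and $a/b$ would be a rational root of $t^2 + 5t + 2$, forcing $\sqrt{17} \in \Q$. Since $17$ is not a perfect square, this is impossible. This proves the first assertion. (Equivalently: $|\det|$ of the Gram matrix of $\mathrm{NS}(S)$ is $17$, not a square, so the rank-$2$ lattice is anisotropic.)

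For the "in particular" part, I would invoke adjunction on the K3 surface $S$. For any curve $C \subset S$, since $K_S \sim 0$,
\[
2p_a(C) - 2 = (C, C + K_S) = (C,C).
\]
If $p_a(C) = 1$, this gives $(C,C) = 0$, and the class of $C$ is a non-zero element of $\mathrm{NS}(S)$ (being the class of an effective curve), contradicting the first part.

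There is no real obstacle here: the only substantive input is the arithmetic fact that $17$ is not a square, which was already implicit in the definition of the lattice $\Lambda$. Everything else is routine parametrization and the K3 adjunction formula.
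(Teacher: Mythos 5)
Your proof is correct and follows essentially the same route as the paper: parametrize $C = aL + bH$, compute $(C,C) = 2(a^2+5ab+2b^2)$, and use the irrationality of $\sqrt{17}$ to rule out nontrivial integer zeros, with the adjunction formula $(C,C) = 2p_a(C)-2$ on the K3 surface handling the ``in particular'' clause. The paper phrases the quadratic step by solving for $x$ directly rather than via the discriminant of $t^2+5t+2$, but this is the same computation.
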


\begin{proof}
Write $C = xL + yH$ in ${\rm NS}\, (S)$. Here $x, y \in \Z$. Then 
$$(C,C) = 2x^2 + 10xy + 4y^2\,\, .$$
So, if $(C,C) = 0$, then $x = (-5 \pm \sqrt{17})y/2$. As $x, y \in \Z$ and $(-5 \pm \sqrt{17})/2 \not\in \Q$, it follows that $y = 0$ and therefore $x=0$. Hence $C = 0$ in ${\rm NS}\, (S)$. 
\end{proof}

\begin{claim}\label{norational}
There is no $C \in {\rm NS}\, (S)$ such that $(L,C) = 0$ and $(C,C) = -2$. 
In particular, there is no smooth rational curve $C \subset S$ such that $(L, C) = 0$. 
\end{claim}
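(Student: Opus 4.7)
The plan is to reduce the claim to a one-variable Diophantine check by parametrizing the orthogonal complement of $L$ inside $\mathrm{NS}(S)$. Writing $C = xL + yH$ with $x, y \in \Z$ in the basis $\langle L, H \rangle$, the condition $(L,C) = 0$ expands via the given Gram matrix to the single linear equation $2x + 5y = 0$. Since $\gcd(2,5) = 1$, its integer solutions form a rank-one sublattice generated by the primitive vector $C_0 := -5L + 2H$, so any admissible $C$ is of the form $k C_0$ with $k \in \Z$.

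I would then substitute back into the quadratic form to get $(C,C) = k^2 (C_0, C_0)$. A short computation with the Gram matrix gives $(C_0, C_0) = -34$, so $(C, C) = -34 k^2$; in particular $(C,C) = -2$ would force $17 k^2 = 1$, which has no integer solution. As a sanity check, this is consistent with Claim \ref{elliptic}: the only self-intersection $0$ class orthogonal to $L$ is $0$ itself, since $34 \nmid 0$ only at $k=0$.

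For the second statement, I would use adjunction on the K3 surface $S$: a smooth rational curve $C$ satisfies $2 p_a(C) - 2 = (C,C) + K_S \cdot C$, and $K_S \sim 0$ together with $p_a(C) = 0$ yields $(C,C) = -2$. Combined with the first half of the claim, the hypothesis $(L,C) = 0$ then leads to an immediate contradiction.

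I do not anticipate any real obstacle here; the argument is a direct lattice computation, and the only care needed is to parametrize the solutions of $(L,C) = 0$ by a primitive generator so that $(C,C)$ is read off as a single definite quadratic in one integer parameter. The underlying reason the argument works so cleanly is that the discriminant $17$ of $\Lambda$ is prime and does not divide $2$, which is precisely what prevents $-2$ from appearing as a value of the restricted form.
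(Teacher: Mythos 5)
Your proposal is correct and follows essentially the same route as the paper: parametrize the integer solutions of $2x+5y=0$ by a single generator $\pm(5L-2H)$ of self-intersection $-34$, conclude $(C,C)=-34k^2\neq -2$, and finish the second assertion by adjunction on the K3 surface. The only quibble is that the parenthetical ``sanity check'' sentence is garbled as written, but it plays no role in the argument.
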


\begin{proof}
Write $C = xL + yH$ in ${\rm NS}\, (S)$. Here $x, y \in \Z$. Then 
$$(L,C) = 2x + 5y\,\, .$$
If $(L,C) = 0$, then there is $z \in \Z$ such that $x = 5z$ and $y = -2z$. Then
$$(C,C) = (5L-2H, 5L -2H)z^2 = -34z^2 \not= -2\,\, .$$ 
This implies the first assertion of Claim \ref{norational}. For the last assertion, we note that if $C$ is a smooth rational curve on $S$, then $(C,C) = -2$ by the adjunction formula. 
\end{proof}

Set $P(S) := \{x \in {\rm NS}\, (S) \otimes \R \, 
\vert\, (x,x) > 0\}$ 
and denote by $P^{+}(S)$ the $\Q$-rational hull of $P(S)$ in ${\rm NS}\, (S) \otimes \R$. Let ${\rm Amp}\,(S) \subset {\rm NS}\, (S) \otimes \R$ be the ample cone of $S$ and ${\rm Amp}^{+}\, (S)$ be the $\Q$-rational hull of ${\rm Amp}\,(S)$ in ${\rm NS}\, (S) \otimes \R$. Note that ${\rm Amp}^{+}\, (S) \subset P^{+}(S)$. Let $W({\rm NS}\, (S))$ be the group generated 
by the reflections on ${\rm NS}\, (S)$: 
$$r_C : x \mapsto x + (x,C)C\,\, .$$
Here $C$ runs through the all elements of ${\rm NS}\, (S)$ such that $(C,C) = -2$.
The product group $W({\rm NS}\, (S)) \times \{\pm id_{{\rm NS}\, (S)}\}$ acts on 
both $P^{+}(S)$ and $P^{+}(S) \cap {\rm NS}\, (S)$. 

\begin{claim}\label{fundamentaldomain}
${\rm Amp}^{+}\, (S)$ is the fundamental domain of the action of $W({\rm NS}\, (S)) \times \{\pm id_{{\rm NS}\, (S)}\}$ on $P^{+}(S)$. 
\end{claim}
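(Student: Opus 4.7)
The claim is a standard fact in the theory of K3 surfaces, going back to Pjatetskij-Shapiro and Shafarevich, and I describe the structure of the proof.

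First I would fix the two connected components of $P(S)$, which are interchanged by $-{\rm id}$, and denote by $\sC^+$ the component containing ${\rm Amp}\, (S)$. Each reflection $r_C$ preserves $\sC^+$, because for any $v$ with $(v,v) > 0$ one has
$$(v, r_C(v)) = (v,v) + (v,C)^2 > 0,$$
so $v$ and $r_C(v)$ lie in the same connected component of $P(S)$. Hence $W({\rm NS}\, (S))$ acts on $\sC^+$, while $\{\pm {\rm id}\}$ interchanges $\sC^+$ with $-\sC^+$.

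Next I would invoke the standard description of the ample cone of a projective K3 surface: ${\rm Amp}\, (S)$ is the connected component, containing any ample class, of $\sC^+$ minus the union of hyperplanes $C^\perp$ as $C$ ranges over smooth rational curves on $S$ (see \cite[Chap.~VIII]{BHPV}). By Riemann--Roch on $S$, for every $\delta \in {\rm NS}\, (S)$ with $(\delta,\delta) = -2$ either $\delta$ or $-\delta$ is effective, and any effective $(-2)$-class decomposes as a sum of classes of smooth rational curves. Thus the hyperplane arrangement $\{C^\perp : C \text{ smooth rational curve on } S\}$ coincides with $\{\delta^\perp : \delta \in {\rm NS}\, (S),\, (\delta,\delta) = -2\}$, and ${\rm Amp}\, (S)$ is precisely one chamber of the decomposition of $\sC^+$ cut out by all $(-2)$-hyperplanes.

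Finally I would apply the classical reflection-group theorem (Vinberg, Bourbaki) that the group generated by the $(-2)$-reflections acts simply transitively on the set of chambers of $\sC^+$ defined by this arrangement; consequently the rational closure of any single chamber is a fundamental domain for $W({\rm NS}\, (S))$ on the $\sC^+$-part of $P^+(S)$. Combining this with the swap of components induced by $-{\rm id}$ shows that ${\rm Amp}^+\, (S)$ is a fundamental domain for $W({\rm NS}\, (S)) \times \{\pm {\rm id}\}$ on $P^+(S)$, as claimed. The principal technical input is the simple transitivity of $W({\rm NS}\, (S))$ on chambers, which is classical; the remaining ingredients are specific but standard features of the geometry of K3 surfaces.
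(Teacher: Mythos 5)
Your argument is correct and is exactly the standard chamber/Weyl-group argument that the paper itself invokes without proof, simply citing \cite[Chap.~VIII]{BHPV} as ``a version of the Nakai--Moishezon criterion.'' The only small imprecision is your assertion that the two hyperplane arrangements $\{C^\perp\}$ and $\{\delta^\perp\}$ literally coincide --- the decomposition of an effective $(-2)$-class into nodal classes does not give that --- but what your argument actually needs, namely that ${\rm Amp}\,(S)$ is a single chamber of the full $(-2)$-arrangement (since every $(-2)$-class or its negative is effective, hence has nonzero pairing with any ample class), is true and salvages the step.
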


\begin{proof}
This is a version of the Nakai-Moishezon criterion for the ampleness of line bundles on a projective K3 surface (See eg., \cite[Chap. VIII]{BHPV}). 
\end{proof}

\begin{claim}\label{ample}
We may (and will) assume that $L$ is ample.
\end{claim}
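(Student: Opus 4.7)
The plan is to use the Weyl-group action of Claim \ref{fundamentaldomain} to move $L$ into the ample cone by a lattice isometry of ${\rm NS}\, (S)$, while transporting $H$ by the same isometry so that the three intersection numbers $(L,L) = 2$, $(H,H) = 4$, $(L,H) = 5$ are preserved.

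Since $(L,L) = 2 > 0$, the class $L$ lies in the positive cone $P(S)$, hence in $P^{+}(S)$. Claim \ref{fundamentaldomain} then yields an element $g \in W({\rm NS}\, (S)) \times \{\pm id_{{\rm NS}\, (S)}\}$ with $g(L) \in {\rm Amp}^{+}\, (S)$. I would set $L' := g(L)$ and $H' := g(H)$; the pair $(L', H')$ is again a $\Z$-basis of ${\rm NS}\, (S)$ satisfying $(L',L') = 2$, $(H',H') = 4$, $(L',H') = 5$, because $g$ is a lattice isometry and any $-id$ component is applied to $L$ and $H$ simultaneously. After renaming $(L', H')$ back to $(L, H)$, I may therefore assume $L \in {\rm Amp}^{+}\, (S)$.

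It remains to upgrade ``$L \in {\rm Amp}^{+}\, (S)$'' to ``$L$ ample''. The conclusion of Claim \ref{norational}---that the original $L$ is orthogonal to no $(-2)$-class in ${\rm NS}\, (S)$---is invariant under lattice isometries of ${\rm NS}\, (S)$: if the new $L$ were orthogonal to some $C$ with $(C,C) = -2$, then $g^{-1}(C)$ would contradict Claim \ref{norational} for the original $L$. Combined with $(L,L) = 2 > 0$, which keeps $L$ off the isotropic boundary of the positive cone, the absence of orthogonal $(-2)$-classes places $L$ off every wall of the nef cone, so $L$ lies in the open ample cone.

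The only obstacle worth flagging is bookkeeping: one must apply $g$ to $H$ at the same time as to $L$, since otherwise a $-id$ component of $g$ would flip the sign of $(L,H)$ and destroy the normalization $(L,H) = 5$. Once this is handled, every other step of the plan is formal.
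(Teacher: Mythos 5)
Your proposal is correct and follows essentially the same route as the paper: both first invoke Claim \ref{fundamentaldomain} to move $L$ into ${\rm Amp}^{+}\, (S)$ (implicitly transporting the whole basis, as you carefully spell out) and then use Claim \ref{norational} to rule out non-ampleness. The only difference is cosmetic: where you argue lattice-theoretically that $L$ lies off every $(-2)$-wall of the nef cone, the paper reaches the same contradiction geometrically, via the base point free theorem, the Hodge index theorem and adjunction, by producing a smooth rational curve orthogonal to $L$.
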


\begin{proof}
By Claim \ref{fundamentaldomain}, we may and will assume that $L \in {\rm Amp}^{+}\, (S) \cap {\rm NS}\, (S)$. Then $L$ is nef and big. Thus $L = K_S + L$ is semi-ample by the base point free theorem (\cite{Ka84}). So, if $L$ would not be ample, then there would be an irreducible curve $C \subset S$ such that $(L, C) = 0$. By the Hodge index theorem, $(C, C) < 0$. However, then $p_a(C) =0$ by the adjunction formula, and $C$ would be a smooth rational curve on $S$ with $(L,C) = 0$, a contradiction to Claim \ref{norational}. 
\end{proof}

{\it From now on, we assume that $L \in {\rm NS}\, (S) \simeq {\rm Pic}\, (S)$ is an ample class.}

\begin{claim}\label{free}
The complete linear system $\vert L \vert$ is free and the associated morphism 
$$\Phi := \Phi_{\vert L \vert} : S \to \BP^2$$
is a finite double cover. 
\end{claim}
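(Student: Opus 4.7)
The plan is to run the standard Saint-Donat / Reider-type analysis of a genus two polarization on a K3 surface, tuned to the lattice $\Lambda$ via the curve-nonexistence claims established just above.

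First I would compute the dimension of the linear system. By Kodaira vanishing applied to the ample class $L$ on the K3 surface $S$ (using $K_S=0$), we have $h^i(S,L)=0$ for $i\ge 1$, so Riemann--Roch gives
\[
h^0(S,L) \;=\; \chi(\sO_S) + \tfrac{1}{2}(L,L) \;=\; 2+1 \;=\; 3,
\]
so the image of $\Phi_{|L|}$ (once we know the system is free) lies in $\BP^2$.

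Next I would prove freeness of $|L|$ using the fact established in Claim \ref{elliptic}. Saint-Donat's theorem (cf.\ \cite[Chap.~VIII]{BHPV}) says that for a nef and big divisor $L$ on a K3 surface, $|L|$ is base-point free unless there exist effective divisors $E$, $\Gamma$ on $S$ with $E^2=0$, $\Gamma^2=-2$, $E\cdot\Gamma=1$ and $L\equiv aE+\Gamma$. The existence of such an $E$ produces a nonzero class in $\mathrm{NS}(S)$ with self-intersection zero, which is ruled out by Claim \ref{elliptic}. Hence $|L|$ is free, and we obtain a morphism
\[
\Phi=\Phi_{|L|} : S \longrightarrow \BP^2.
\]

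Then I would show $\Phi$ is generically finite of degree $2$. It cannot factor through a curve, because otherwise $L$ would be numerically a pullback from a curve and satisfy $(L,L)=0$, contradicting $(L,L)=2$. So $\Phi(S)\subset\BP^2$ is two-dimensional, hence equal to $\BP^2$. The projection formula gives $(\deg\Phi)\cdot\deg\Phi(S)=(L,L)=2$, and since $\deg\BP^2=1$, this forces $\deg\Phi=2$.

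Finally I would upgrade ``generically finite'' to ``finite'': if $\Phi$ contracted an irreducible curve $C\subset S$, then $(L,C)=0$ and the Hodge index theorem together with $(L,L)>0$ would force $(C,C)<0$, so by the adjunction formula on a K3 surface $C$ would be a smooth rational curve with $(L,C)=0$, directly contradicting Claim \ref{norational}. Therefore $\Phi$ is a finite double cover $S\to\BP^2$, as claimed. The main step, and the only place where the specific lattice $\Lambda$ really intervenes, is the appeal to Claim \ref{elliptic} to exclude the Saint-Donat exceptional case; the rest is formal.
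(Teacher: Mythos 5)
Your proof is correct and follows essentially the same route as the paper: both reduce freeness of $|L|$ to Saint-Donat plus the non-existence of a nonzero square-zero class (Claim \ref{elliptic}), and then get $h^0(S,L)=3$ and $\deg\Phi=2$ from Riemann--Roch and $(L,L)=2$. The only cosmetic difference is that you invoke the classification of the exceptional (unigonal) case $L\equiv aE+\Gamma$ directly, whereas the paper uses \cite[Cor.~3.2]{SD74} to reduce base points to a nonzero fixed component $F$ and then shows the movable part $M$ would satisfy $(M,M)=0$; both land on the same contradiction.
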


\begin{proof}
We prove first that the complete linear system $\vert L \vert$ is free. Consider the decomposition 
$$\vert L \vert = \vert M \vert + F\,\, .$$ 
Here $\vert M \vert$ is the movable part and $F$ is the fixed component of $|L|$. If $|L|$ would not be free, then $F$ would be effective and {\it non-zero} by \cite[Cor. 3.2]{SD74}. Then 
$$2 = (L,L) > (L,M) \ge (M,M) \ge 0\,\, .$$ 
As $(M,M) \in 2 \Z$, it would follow that $(M,M) = 0$, a contradiction to Claim \ref{elliptic}.  Hence $|L|$ is free. 
As $(L,L) =2$ and $L$ is ample, we have $h^0(S, L) = 3$ by the Riemann-Roch formula. Hence $\Phi_{\vert L \vert}$ is a finite double covering of $\BP^2$ 
as claimed.  
\end{proof}
Let $\tau \in {\rm Bir}\, (S)$ be the covering involution of $\Phi : S \rightarrow \BP^2$. 
\begin{claim}\label{involution}
$\tau \in {\rm Aut}\, (S)$ and 
$$\tau^* = \left(\begin{array}{rr}
1 & 5\\
0 & -1\\
\end{array} \right)\,\,$$
on ${\rm NS}\,(S)$ with respect to the $\Z$-basis $\langle L, H \rangle$ of ${\rm NS}\, (S)$. 
\end{claim}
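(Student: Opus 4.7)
The plan is to establish biregularity of $\tau$ first, and then to determine its matrix by combining intersection-theoretic constraints with the standard description of the invariant part of $H^2$ under a Galois cover.

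First, since $\Phi : S \to \BP^2$ is a finite morphism of degree two (Claim \ref{free}) with $S$ normal, the covering involution $\tau$ is automatically a biregular automorphism of $S$ over $\BP^2$: this is the standard fact that the Galois group of a finite Galois cover between normal varieties acts by biregular deck transformations. Hence $\tau \in {\rm Aut}(S)$, which already settles the first half of the claim.

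Next, since $L = \Phi^{*} \sO_{\BP^2}(1)$, one has $\tau^{*} L = L$, giving the first column $(1,0)^{T}$ of the matrix. Writing $\tau^{*} H = a L + b H$ for integers $a,b$, the two constraints $(\tau^{*} H, L) = (H, L) = 5$ and $(\tau^{*} H, \tau^{*} H) = (H, H) = 4$ reduce to the system
$$2a + 5b = 5, \qquad a^{2} + 5ab + 2b^{2} = 2.$$
A short check shows that the only integer solutions are $(a,b) = (0,1)$ and $(a,b) = (5,-1)$, the latter being precisely the coefficients appearing in the stated matrix.

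The crux of the proof is therefore to rule out $(a,b) = (0,1)$, which would force $\tau^{*}$ to act as the identity on ${\rm NS}(S)$. For this I would invoke the fact that, for the Galois double cover $\Phi$, the $+1$-eigenspace of $\tau^{*}$ on $H^{2}(S, \Q)$ coincides with $\Phi^{*} H^{2}(\BP^{2}, \Q)$; this follows by averaging over $\{1, \tau\}$ together with the projection formula $\Phi_{*} \Phi^{*} = 2 \cdot {\rm id}$. Intersecting this subspace with ${\rm NS}(S) \otimes \Q$ yields $\Q L$, so $H \notin \Q L$ cannot be $\tau^{*}$-fixed, forcing $(a,b) = (5,-1)$. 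I expect this eigenspace identification to be the only step requiring genuine care; everything else is a direct calculation in the rank-two lattice $\Z\langle L, H\rangle$.
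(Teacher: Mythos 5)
Your proof is correct, but it reaches the matrix by a different route than the paper. For biregularity, the paper simply notes that ${\rm Bir}\,(S)={\rm Aut}\,(S)$ because $K_S$ is nef, whereas you use the (equally valid, and more general) fact that deck transformations of a finite cover of normal varieties are biregular. For the matrix, the paper computes $\tau^*H$ \emph{directly}: it applies the identity $\Phi^*\Phi_*H = H+\tau^*H$ together with the projection formula to get $(\Phi^*\Phi_*H,L)=2m=2(H,L)=10$, hence $H+\tau^*H=\Phi^*\sO_{\BP^2}(5)=5L$ and $\tau^*H=5L-H$ in one stroke, with no case analysis. You instead enumerate the isometries of the rank-two lattice fixing $L$ (correctly finding the two candidates $(a,b)=(0,1)$ and $(5,-1)$, since the constraints force $17b^2=17$) and then exclude the identity by identifying the $\tau^*$-invariant part of $H^2(S,\Q)$ with $\Phi^*H^2(\BP^2,\Q)=\Q L$. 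Note that your exclusion step rests on the same transfer identity $\Phi^*\Phi_*=\mathrm{id}+\tau^*$ that the paper uses, just applied abstractly rather than to the class $H$ itself; in that sense the paper's argument is the more economical specialization of yours. What your version buys is robustness: the lattice enumeration plus invariant-subspace argument would still work if one only knew $\tau^*$ is an isometry fixing $L$, without access to an explicit pushforward computation.
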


\begin{proof} 
Note that ${\rm Bir}\, (S) = {\rm Aut}\, (S)$ as $S$ is a smooth projective surface with nef $K_S$. Hence $\tau \in {\rm Aut}\, (S)$. By the definition, $L = \Phi^*{\mathcal O}_{{\mathbf P}^2}(1)$. 
Thus $\tau^*L = L$. 

We have
$$\Phi^*\Phi_* H = H + \tau^*H\,\, .$$
Here $\Phi_* H$ is the pushforward as $1$-cycles. Then $\Phi_* H \in \vert {\mathcal O}_{\BP^2}(m) \vert$ for some $m \in \Z_{>0}$. Then, by the projection formula, we have
$$(\Phi^*\Phi_* H,L) = (\Phi^*\Phi_* H, \Phi^*{\mathcal O}_{\BP^2}(1)) 
= 2(\Phi_* H,{\mathcal O}_{\BP^2}(1)) = 2m\,\, .$$
On the other hand, as $\Phi^*\Phi_* H = H + \tau^*H$ and $\tau^*L = L$, we also have
$$(\Phi^*\Phi_* H, L) = (H+ \tau^*H, L) = (H,L) + (\tau^*H,L) = (H,L) + 
(\tau^*H , \tau^*L) = 2(H,L) = 10\,\, .$$
Hence $m = 5$ and $H + \tau^*H = \Phi^{*}({\mathcal O}_{\BP^2}(5))$. As $L = \Phi^*{\mathcal O}_{{\mathbf P}^2}(1)$, it follows that $\tau^*H = 5L -H$. Combining this with 
$\tau^*L = L$, we obtain the last assertion.
\end{proof}

\begin{claim}\label{trivial}
If $C$ is a smooth rational curve on $S$, then $\vert C \vert = \{C\}$.
\end{claim}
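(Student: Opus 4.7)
The plan is a standard negativity argument. Take any effective divisor $D \in |C|$; I want to show $D = C$.

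First I would compute the intersection number $(D,C)$ using linear equivalence: since $D \sim C$, we have $(D,C) = (C,C) = -2$, where the last equality is the adjunction formula applied to the smooth rational curve $C$ on the K3 surface $S$. Next, decompose $D = \sum_i a_i C_i$ into distinct irreducible components with $a_i > 0$. Then $\sum_i a_i (C_i, C) = -2 < 0$. If $C$ were not among the $C_i$, then $(C_i, C) \ge 0$ for every $i$ because distinct irreducible curves have non-negative intersection; this would force the sum to be non-negative, a contradiction. Hence $C$ appears in $D$, say $D = C + D'$ with $D'$ an effective divisor.

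Finally, from $D \sim C$ we get $D' \sim 0$. Since $S$ is a regular surface (indeed $h^1(S,\mathcal O_S) = 0$ for a K3 surface), the only effective divisor linearly equivalent to $0$ is $0$ itself: if $D' = (f)$ for some non-zero rational function $f$ and $D' \ge 0$, then $f$ has no poles, so $f \in H^0(S, \mathcal O_S) = \mathbb C$, giving $D' = 0$. Therefore $D = C$, and $|C| = \{C\}$.

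There is no real obstacle here; the only thing to be careful about is the decomposition step, where one must use the standard fact that two distinct irreducible curves on a smooth surface meet non-negatively, together with the regularity of $S$ to rule out non-trivial effective divisors in the trivial class.
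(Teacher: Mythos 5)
Your argument is correct and is exactly the standard rigidity argument for a curve of negative self-intersection that the paper invokes by citing \cite[Chap. VIII]{BHPV}: the two inputs you use, the irreducibility of $C$ together with $(C,C)=-2<0$ and the vanishing of effective divisors linearly equivalent to zero, are precisely the facts the paper's one-line proof points to. One small remark: your last step does not actually use the regularity $h^1(S,\mathcal{O}_S)=0$ --- the ``no poles implies constant'' argument only needs that $S$ is integral and projective.
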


\begin{proof} 
This is well-known and easily follows from the irreducibility of $S$ and 
the fact that $(C,C) = -2 <0$ (See eg. \cite[Chap. VIII]{BHPV}).
\end{proof}

\begin{claim}\label{firstcondition}

(1) $S$ has exactly two smooth rational curves and their classes in ${\rm NS}\, (S)$ are
$$-L + 2H\,\, ,\,\, \tau^*(-L+2H) = 9L - 2H\,\, .$$
(2) The cone of effective curves on $S$ is
$$\overline{{\rm NE}}(S) = \R_{\ge 0}(-L + 2H) + \R_{\ge 0}(9L - 2H)\,\, .$$
(3) $S$ satisfies the condition (1) in Theorem \ref{geometric}. 
\end{claim}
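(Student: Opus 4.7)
My plan is to prove the three assertions by first enumerating all $(-2)$-classes in ${\rm NS}(S) = \Z\langle L, H\rangle$, identifying which are effective, and then arguing the distinguished ones must be irreducible smooth rational curves; parts (2) and (3) will follow from (1). For (1), writing $C = xL + yH$ and imposing $C^2 = -2$ gives $x^2 + 5xy + 2y^2 = -1$, equivalently the Pell-type equation $(2x+5y)^2 - 17 y^2 = -4$. The solutions with smallest $|y|$ are $(x,y) = (-1, 2)$ and $(9, -2)$; any other solution has $|y| \geq 130$ and hence $|2x+5y| \geq 536$. By Riemann-Roch, $h^0(D) + h^0(-D) \geq 1$ whenever $D^2 = -2$, and ampleness of $L$ combined with the sign of $(L, D)$ selects the effective representative, so $-L+2H$ and $9L-2H$ are both effective.

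The key step is showing $-L+2H$ is irreducible. Suppose $-L+2H = \sum n_i C_i$ with the $C_i$ distinct irreducible curves and $n_i \geq 1$. Since $(-L+2H)^2 < 0$, some $C_j$ must satisfy $C_j^2 < 0$; together with Claim \ref{elliptic} and the evenness of the intersection form, this forces $C_j^2 = -2$, so $C_j$ is a smooth rational curve with $(L, C_j) \leq (L, -L+2H) = 8$. The Pell analysis above then restricts $C_j$ to $\{-L+2H, 9L-2H\}$. The case $C_j = 9L-2H$ is ruled out because the residue $-L+2H - (9L-2H) = -10L+4H$ would have to be effective but is nonzero with $(L, -10L+4H) = 0$, contradicting ampleness of $L$. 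Hence $C_j = -L+2H$ and the decomposition is trivial, so $-L+2H$ is an irreducible smooth rational curve. Applying Claim \ref{involution}, $\tau^*(-L+2H) = 9L-2H$ is also a smooth rational curve. Uniqueness: any third smooth rational curve $C$ distinct from both $C_0 := -L+2H$ and $C_1 := 9L-2H$ would satisfy $(C, C_0), (C, C_1) \geq 0$ as distinct irreducible curves, hence be nef; but nef classes on a K3 surface have nonnegative self-intersection, contradicting $C^2 = -2$.

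For (2), since $C_0, C_1$ are the only $(-2)$-curves by (1) and there are no isotropic classes by Claim \ref{elliptic}, the nef cone is $\overline{\rm Amp}(S) = \overline{P^+(S)} \cap \{N : (N, C_0) \geq 0, (N, C_1) \geq 0\}$; this is the two-dimensional rational polyhedral cone with extremal rays spanned by $-3L+8H$ and $37L-8H$, both having positive self-intersection. Dualizing under the intersection pairing, the extremal rays of $\overline{NE}(S)$ are the effective classes orthogonal to these, which are precisely $C_0$ and $C_1$, giving $\overline{NE}(S) = \R_{\geq 0}(-L+2H) + \R_{\geq 0}(9L-2H)$. Part (3) is then immediate from (1) since $-L+2H$ is a smooth rational curve on $S$. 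The main obstacle is the irreducibility step in (1): the arithmetic gap between $|y| = 2$ and $|y| = 130$ among the Pell solutions is the essential ingredient ensuring that the only candidate components of $-L+2H$ are $C_0$ and $C_1$, after which a one-line $(L, \cdot)$-check excludes $C_1$.
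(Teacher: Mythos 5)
Your existence and irreducibility argument for the class $-L+2H$ is, in substance, the paper's own: the paper bounds $k=(L,C_1)$ by $1\le k\le 8$ and extracts $k^2+4=17l^2$ from the discriminant of the sublattice $\Z\langle L,C_1\rangle$ inside ${\rm NS}(S)$ of discriminant $17$, which is exactly your Pell relation $(2x+5y)^2-17y^2=-4$ read through $(L,C)=2x+5y$; both force $k=8$, and the residual effective part is then killed by ampleness of $L$. Your treatment of uniqueness and of part (2) is more explicit than the paper's, which disposes of them in one line via Claim \ref{trivial} and $\rho(S)=2$.

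However, one step in your uniqueness argument fails as written: from $(C,C_0)\ge 0$ and $(C,C_1)\ge 0$ you conclude that a hypothetical third smooth rational curve $C$ ``would be nef.'' It cannot be nef, since nefness requires $(C,\Gamma)\ge 0$ for \emph{every} irreducible curve $\Gamma$, including $\Gamma=C$ itself, where $(C,C)=-2<0$. The inference ``nonnegative against $C_0$ and $C_1$ implies nef'' would presuppose $\overline{{\rm NE}}(S)=\R_{\ge 0}C_0+\R_{\ge 0}C_1$, i.e.\ your part (2), which you in turn deduce from uniqueness --- a circularity. The repair is standard and uses data you already computed: the conditions $(C,C_0)\ge 0$, $(C,C_1)\ge 0$ place $C$ in the dual cone of $\R_{\ge 0}C_0+\R_{\ge 0}C_1$, which is spanned by $-3L+8H$ and $37L-8H$; both have self-intersection $34>0$ and pair positively with the ample class $L$, so by convexity of the positive cone this dual cone is contained in the closure of $P^{+}(S)$, whence $(C,C)\ge 0$, the desired contradiction. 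With that substitution (and the consequent de-circularization of part (2)) the proof is complete.
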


\begin{proof} 
The assertion (3) follows from (2). Recall that $\rho(S) =2$. 
Then (2) follows from (1) and Claim \ref{trivial}. The fact that $\tau^*(-L +2H) = 9L - H$ follows from Claim \ref{involution}. By Claim \ref{trivial}, it now suffices to show that $\vert -L + 2H \vert$ contains the class of a smooth rational curve. Set 
$$C := -L + 2H$$
in ${\rm NS}\, (S)$. Then 
$$(C,C) = -2\,\, ,\,\, (C,L) = 8 > 0\,\, .$$
Hence, by the Riemann-Roch formula and the Serre duality, it follows that $\vert C \vert \not= \emptyset$. Note here that $L$ is ample. As $(C,C) = -2$, we can then choose a smooth rational curve $C_1$ and an effective curve $C_2$, possibly $0$, such that 
$$C_1 + C_2 \in \vert C \vert\,\, .$$
As $(L,C) = (L,-L+2H) = 8$ and $L$ is ample, it follows that 
$$1 \le k \le 8\,\, .$$
Here we put $k := (L,C_1)$. 
Consider the sublattice 
$$M := \Z \langle L, C_1 \rangle \subset {\rm NS}\, (S)\,\, .$$
The intersection matrix of $M$ with respect to the $\Z$-basis $\langle L, C_1 \rangle$ of $M$ 
is
$$\left(\begin{array}{rr}
2 & k\\
k & -2\\
\end{array} \right)\,\, .$$
The discriminant of $M$ is then $k^2 +4$. On the other hand, the discriminant of ${\rm NS}\, (S)$ is $17$. Thus, by the elementary divisor theorem, we have 
$$k^2 + 4 = 17 l^2$$ 
for some $l \in \Z$. As $l$ and $k$ are integers such that $1 \le k \le 8$, it follows that $k = 8$. Hence $(L,C_2) = 0$. Thus $C_2 = 0$, as $L$ is ample and $C_2$ is effective. Therefore $C = -L +2H$ is the class of a smooth rational curve 
$C_1$.
\end{proof}

From now, we shall show that $H$ and $\tau^*H$ satisfy the conditions (2) and (3) in Theorem \ref{geometric}. We proceed by dividing into several steps.

\begin{claim}\label{ampleh}
$H$ is ample and $h^0(S, H) = 4$.
\end{claim}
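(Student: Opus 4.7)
The plan is to verify ampleness of $H$ via the Nakai--Moishezon criterion (equivalently, Kleiman's criterion), using the explicit description of the effective cone in Claim~\ref{firstcondition}(2), and then compute $h^0(S, H)$ by Riemann--Roch together with Kodaira vanishing.

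First, I would check the numerical conditions for ampleness. We have $(H,H) = 4 > 0$ by hypothesis, so it suffices to verify that $H$ has strictly positive intersection with every irreducible curve on $S$. By Claim~\ref{firstcondition}(2), the cone $\overline{{\rm NE}}(S)$ is generated by the classes of the two smooth rational curves $C_1 = -L + 2H$ and $C_2 = 9L - 2H$, so it is enough to check
\begin{align*}
(H, -L + 2H) &= -(L,H) + 2(H,H) = -5 + 8 = 3 > 0,\\
(H, 9L - 2H) &= 9(L,H) - 2(H,H) = 45 - 8 = 37 > 0.
\end{align*}
Hence by Kleiman's ampleness criterion, $H$ is ample.

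Second, I would compute the cohomology of $H$. By Riemann--Roch on the K3 surface $S$,
$$\chi(S, H) = 2 + \tfrac{1}{2}(H,H) = 2 + 2 = 4.$$
Since $H$ is ample, Kodaira vanishing gives $h^1(S, H) = 0$, and Serre duality yields $h^2(S, H) = h^0(S, -H)$, which vanishes because $-H$ is not effective: indeed the effective cone in Claim~\ref{firstcondition}(2) lies in the half-plane where $(L, \cdot) \ge 0$, while $(L, -H) = -5 < 0$. Therefore $h^0(S, H) = \chi(S, H) = 4$, completing the proof.

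There is no real obstacle here; everything reduces to the explicit description of $\overline{{\rm NE}}(S)$ already established in Claim~\ref{firstcondition} together with standard vanishing on K3 surfaces.
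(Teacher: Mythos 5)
Your proof is correct and follows essentially the same route as the paper: ampleness via Kleiman's criterion applied to the two extremal classes $-L+2H$ and $9L-2H$ from Claim~\ref{firstcondition}(2), then $h^0(S,H)=4$ from Riemann--Roch. The paper leaves the vanishing of $h^1$ and $h^2$ implicit in the phrase ``follows from the Riemann--Roch formula,'' whereas you spell it out via Kodaira vanishing and Serre duality; this is just a more detailed version of the same argument.
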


\begin{proof} 
Observe that
$$(H,-L+2H) = 3 > 0\,\, ,\,\, (H,9L-2H) = 37 > 0\,\, .$$
Thus $H$ is ample by the Kleiman's criterion and 
Claim \ref{firstcondition} (2). 
As $(H,H) = 4$, the second assertion now follows from the Riemann-Roch 
formula.
\end{proof}

\begin{claim}\label{movableh}
$\vert H \vert$ has no fixed component.
\end{claim}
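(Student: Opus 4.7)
The plan is to argue by contradiction: suppose that in the decomposition $|H|=|M|+F$ into movable and fixed parts, the fixed part $F$ is a nonzero effective divisor, and derive a contradiction with the ampleness of $L$.

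First I would pin down what $F$ can look like. Every prime component $C$ of $F$ is by definition a fixed curve of $|H|$, hence $h^0(S,\sO_S(C))=1$. Since $S$ is a K3 surface and $C$ is a nonzero irreducible effective curve, $-C$ is not effective, so Serre duality gives $h^2(S,\sO_S(C))=h^0(S,\sO_S(-C))=0$; Riemann--Roch then yields $h^0(S,\sO_S(C))\ge 2+(C,C)/2$, forcing $(C,C)\le -2$. On the other hand the adjunction formula gives $p_a(C)=1+(C,C)/2\ge 0$ for the irreducible curve $C$, so $(C,C)=-2$ and $C$ is a smooth rational curve. By Claim \ref{firstcondition}(1) the only smooth rational curves on $S$ are $C_1=-L+2H$ and $C_2=9L-2H$, so
\[
F=\alpha C_1+\beta C_2,\qquad \alpha,\beta\in\Z_{\ge 0},\qquad \alpha+\beta\ge 1.
\]

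Next I would pair everything with $L$. A direct computation using $(L,L)=2$ and $(L,H)=5$ gives $(L,C_1)=-2+10=8$ and $(L,C_2)=18-10=8$, hence $(L,F)=8(\alpha+\beta)\ge 8$. On the other hand $h^0(S,M)=h^0(S,H)=4$ by Claim \ref{ampleh}, so the movable part $M=H-F$ is effective and nonzero; since $L$ is ample by Claim \ref{ample}, this forces $(L,M)>0$. But
\[
(L,M)=(L,H)-(L,F)=5-8(\alpha+\beta)\le -3,
\]
a contradiction. Hence $F=0$.

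The main obstacle, modest as it is, will be the Riemann--Roch step that forces each prime component of $F$ to be a $(-2)$-curve; once that identification is in hand the argument collapses to a one-line numerical incompatibility between the ampleness of $L$ and the large pairing $(L,C_i)=8$ versus $(L,H)=5$. Note that this argument uses in an essential way that the two rational curves $C_1,C_2$ of Claim \ref{firstcondition}(1) both have the \emph{same} (and large) intersection with $L$.
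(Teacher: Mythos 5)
Your proof is correct, but it follows a genuinely different route from the paper's. You work on the \emph{fixed} part $F$: you show each prime component of $F$ is a $(-2)$-curve, invoke Claim \ref{firstcondition}(1) to conclude $F=\alpha C_1+\beta C_2$ with $(L,C_1)=(L,C_2)=8$, and then contradict $(L,H)=5$ using the ampleness of $L$. The paper instead works on the \emph{movable} part $M$: using Claim \ref{elliptic} it pins down $(M,M)=2$ and $1\le (M,L)\le 4$, then runs the same discriminant/elementary-divisor computation as in Claim \ref{firstcondition} (here $|k^2-4|=17l^2$ forces $k=2$, $l=0$) to conclude $M=L$ by the Hodge index theorem, contradicting $h^0(S,H)=4\ne 3=h^0(S,L)$. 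Your version has the advantage of being more geometric and of reusing the already-established list of rational curves, reducing everything to the single inequality $8>5$; the paper's version is self-contained at the lattice level and does not need the classification of $(-2)$-curves. Two small points to tighten: the step ``$C$ is a component of the fixed part, hence $h^0(S,\sO_S(C))=1$'' is standard but not literally a definition --- it follows, e.g., from $h^0(H-C)=h^0(H)$ together with $\dim|H|\ge\dim|H-C|+\dim|C|$ --- and in the final inequality you only need $(L,M)\ge 0$ for the effective divisor $M$, so the discussion of $M$ being nonzero is not essential.
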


\begin{proof} 
Assume to the contrary that $\vert H \vert$ has a fixed component. Then $\vert H \vert = \vert M \vert + F$, where $\vert M \vert$ is the movable part and $F$ is a non-zero effective divisor. Using Claim \ref{elliptic}, we have
$$4 = (H,H) > (H,M) \ge (M,M) > 0\,\, ,\,\, 5 = (H,L) > (M,L) > 0\,\, .$$
Therefore $(M,M) = 2$ by Claim \ref{elliptic} and by the fact that $(M, M) \in 2 \Z$. Put $k := (M,L)$. Then $1 \le k  \le 4$ 
by the second inequality above, and the discriminant of $\Z \langle L, M \rangle$ is $\vert k^2 - 4 \vert$, possibly $0$. For the same reason as in Claim \ref{firstcondition}, we have 
$$\vert k^2 - 4 \vert = 17 l^2$$ for some $l \in \Z$. As $l$ and $k$ are integers such that $1 \le k \le 4$, it follows that $k =2$ and therefore $\vert k^2 - 4 \vert = 0$. Thus $M = mL$ for some $m \in \Q$ by the Hodge index theorem. Then, by $(M,M) = 2 = (L,L)$, it follows that $M = L$ in ${\rm Pic}\, (S) \simeq {\rm NS}\, (S)$, a contradiction to the fact that $h^0(S, M) = h^0(S, H) = 4$ and $h^0(S, L) = 3$. 
\end{proof}

\begin{claim}\label{veryampleh}
$H$ is very ample.
\end{claim}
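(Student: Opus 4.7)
The plan is to verify the three standard conditions for very ampleness of an ample line bundle on a K3 surface via Saint-Donat's criterion: base-point-freeness of $|H|$, birationality of $\Phi_{|H|}$, and absence of $H$-orthogonal smooth rational curves. The inputs are Claims \ref{elliptic}, \ref{firstcondition}, \ref{ampleh}, and \ref{movableh}.

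\emph{Step 1 (base-point-freeness).} Since $|H|$ has no fixed component (Claim \ref{movableh}), any base point of $|H|$ would force, in the standard Saint-Donat analysis, a decomposition involving an elliptic pencil---that is, a nonzero divisor class on $S$ of self-intersection $0$---which is forbidden by Claim \ref{elliptic}. Hence $|H|$ is base-point-free and defines a morphism $\Phi := \Phi_{|H|} : S \to \BP^3$, where we use $h^0(S,H) = 4$ from Claim \ref{ampleh}.

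\emph{Step 2 (birationality).} Let $S_H := \Phi(S)$. Then $S_H$ is non-degenerate in $\BP^3$ and $(\deg \Phi)(\deg S_H) = H^2 = 4$. If $\deg \Phi \geq 2$, then $\deg S_H \leq 2$, so $S_H$ must be a quadric surface; pulling back a line from a ruling of $S_H$ (or of its normalization, in the cone case) yields an effective divisor class $D$ on $S$ with $(D,D) = 0$, again contradicting Claim \ref{elliptic}. Hence $\deg \Phi = 1$ and $\Phi$ is birational onto a quartic surface $S_H \subset \BP^3$.

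\emph{Step 3 (closed immersion).} Any curve contracted by $\Phi$ would be a smooth rational curve $R$ on $S$ with $(R,H) = 0$. By Claim \ref{firstcondition}, the only smooth rational curves on $S$ have classes $-L + 2H$ and $9L - 2H$, whose $H$-degrees are $3$ and $37$ respectively, both nonzero. Hence $\Phi$ contracts no curve, i.e., $\Phi$ is a finite birational morphism. Combined with Steps 1 and 2, Saint-Donat's structure theorem for projective models of genus-$3$ K3 surfaces then forces $\Phi$ to be a closed immersion, identifying $S$ with a smooth quartic in $\BP^3$, so $H$ is very ample.

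The chief subtlety lies in Step 3: while "$\Phi$ is a finite birational morphism" is quick to verify, upgrading this to "closed immersion" a priori leaves open the possibility that $S_H$ is a non-normal quartic with $\Phi$ its normalization. The Saint-Donat analysis rules this out by showing that any such singularity of $S_H$ would force the presence of either an $H$-orthogonal $(-2)$-curve on $S$ or a divisor of self-intersection $0$, both of which are excluded by Claims \ref{firstcondition} and \ref{elliptic}.
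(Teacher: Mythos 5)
Your proof is correct and follows essentially the same route as the paper: both rest on Saint-Donat's criterion \cite[Thm. 5.2]{SD74}, excluding the obstructing curve classes via Claims \ref{elliptic}, \ref{firstcondition}, \ref{ampleh} and \ref{movableh}. The paper simply checks the two numerical conditions of that theorem directly (no genus-one curve of $H$-degree $2$, and no curve $B$ with $p_a(B)=2$ and $H=2B$), whereas you unpack the criterion into its geometric stages; your parenthetical treatment of the quadric-cone case in Step 2 is slightly imprecise (the cone is already normal, and the pulled-back class there would have odd self-intersection rather than square zero), but the contradiction persists either way.
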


\begin{proof}
By \cite[Theorem 5.2]{SD74} and by Claim \ref{movableh}, it suffices to check the following two facts:

(i) there is no irreducible curve $E \subset S$ such that $p_a(C) = 1$ and $(E, H) =2$;

(ii) there is no irreducible curve $B \subset S$ such that $p_a(B) = 2$ and $H = 2B$ in ${\rm NS}\, (S)$.

The assertion (i) follows from Claim \ref{elliptic}. By the adjunction formula, $(B, B) =2$ if $p_a(B) = 2$. The assertion (ii) follows 
from this equality and $(H,H) =4$. 
\end{proof}

\begin{claim}\label{line}
There is no $C \simeq \BP^1$ on $S$ such that $(H,C) = 1$.
\end{claim}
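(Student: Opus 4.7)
The plan is to suppose for contradiction that a smooth rational curve $C \subset S$ with $(H,C) = 1$ exists, and derive a numerical contradiction from the discriminant of the rank-$2$ sublattice $\Z\langle H, C\rangle$ of $\mathrm{NS}(S)$, in the exact spirit of Claims~\ref{firstcondition} and~\ref{movableh}. By the adjunction formula on the K3 surface $S$, a smooth rational curve satisfies $(C,C) = -2$.

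First I would check that $H$ and $C$ are linearly independent in $\mathrm{NS}(S)\otimes\Q$. Writing $C = xL + yH$ with $x,y\in\Z$, the hypothesis $(H,C) = 1$ becomes $5x + 4y = 1$. If $C$ were proportional to $H$ in $\mathrm{NS}(S)$, then $x = 0$, whence $(H,C) = 4y \in 4\Z$, which is not $1$; so $x \neq 0$, and $H$ and $C$ are $\Q$-linearly independent.

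Next I would form the rank-$2$ sublattice $M := \Z\langle H, C\rangle \subset \mathrm{NS}(S)$, whose Gram matrix with respect to the basis $\langle H, C\rangle$ is
$$\begin{pmatrix} (H,H) & (H,C) \\ (C,H) & (C,C) \end{pmatrix} = \begin{pmatrix} 4 & 1 \\ 1 & -2 \end{pmatrix},$$
so $|d(M)| = |4\cdot(-2) - 1| = 9$. By the elementary divisor theorem applied to $M \hookrightarrow \mathrm{NS}(S)$, one has $|d(M)| = [\mathrm{NS}(S) : M]^2 \cdot |d(\mathrm{NS}(S))|$, hence $9 = 17\,\ell^2$ for some $\ell \in \Z_{>0}$. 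This is impossible, so no such $C$ exists.

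This proof is essentially mechanical once the discriminant identity is in hand; there is no real obstacle beyond verifying linear independence (which is immediate from $(H,C) = 1 \notin 4\Z$) and the discriminant computation. As a sanity check, the same conclusion can be read off from Claim~\ref{firstcondition}(1): the only two smooth rational curves on $S$ have classes $-L+2H$ and $9L-2H$, with $H$-degrees $3$ and $37$ respectively, neither equal to $1$; but the self-contained lattice argument above is closer to the style used elsewhere in this section and avoids invoking the full strength of Claim~\ref{firstcondition}(1).
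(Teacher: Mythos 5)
Your proof is correct, but it takes a different route from the paper. The paper's own proof of this claim is a one-liner: by Claim~\ref{firstcondition}(1) the only smooth rational curves on $S$ have classes $-L+2H$ and $9L-2H$, and $(H,-L+2H)=3$, $(H,9L-2H)=37$, neither of which is $1$. You instead run a self-contained discriminant argument on the sublattice $\Z\langle H,C\rangle$: the Gram matrix $\left(\begin{smallmatrix}4&1\\1&-2\end{smallmatrix}\right)$ has determinant $-9$, and since $9=17\ell^2$ has no integer solution, no such $C$ can exist. Your linear-independence check ($(H,C)=1\notin 4\Z$) and the index formula $|d(M)|=[\mathrm{NS}(S):M]^2\,|d(\mathrm{NS}(S))|$ are both applied correctly, so the argument is sound. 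This is exactly the technique the paper uses inside the proofs of Claims~\ref{firstcondition} and~\ref{movableh} (where $k^2+4=17l^2$ and $|k^2-4|=17l^2$ appear), so stylistically you are on home ground; what your version buys is independence from the full classification of rational curves in Claim~\ref{firstcondition}(1), at the cost of redoing a computation that the paper gets for free once that classification is in hand. Both are valid; the paper's is shorter in context, yours is more robust if one only knows that some smooth rational curve exists without having pinned down its class.
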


\begin{proof}
Recall that the class of $C$ is either $-L +2H$ or $\tau^*(-L + 2H) = 9L -2H$ 
by Claim \ref{firstcondition}.  The result now follows from
$$(H, -L+2H) = 3 \not= 1\,\, ,\,\, (H, 9L-2H) = 37 \not= 1\,\, .$$
\end{proof}
Put $H_1 := H$ and $H_2 := \tau^* H$. Then $H_2 = 5L -H$ by Claim \ref{involution}.
\begin{claim}\label{secondcondition}
$H_1$ and $H_2$ satisfy the conditions (2) and (3) in Theorem \ref{geometric}. 
\end{claim}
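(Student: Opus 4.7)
The plan is to verify conditions (2) and (3) of Theorem \ref{geometric} for the pair $(H_1, H_2) = (H, \tau^* H) = (H, 5L - H)$ directly, by combining the facts already established in this section.

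For condition (2), I would argue in three parts. First, $H_1 = H$ is very ample by Claim \ref{veryampleh} and $(H_1, H_1) = 4$ from the definition of $\Lambda$. Second, because $\tau \in \mathrm{Aut}(S)$ is a biregular automorphism (Claim \ref{involution}), the divisor $H_2 = \tau^* H$ is again very ample and satisfies $(H_2, H_2) = (\tau^* H, \tau^* H) = (H, H) = 4$. Third, the equality $H_2 = 5L - H$ from Claim \ref{involution} gives $H_1 - H_2 = 2H - 5L$, which is non-zero in $\mathrm{NS}(S) = \Z\langle L, H\rangle$, so $H_1 \neq H_2$ in $\mathrm{NS}(S)$.

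For condition (3), the key input is Claim \ref{firstcondition}(1): the only smooth rational curves on $S$ have classes $-L + 2H$ or $9L - 2H$. For $k = 1$ the absence of a smooth rational curve $C_1$ with $(C_1, H_1) = 1$ is precisely the content of Claim \ref{line}. For $k = 2$, I would argue by transporting via $\tau$: since $\tau$ is a biregular automorphism, it permutes the (finitely many) smooth rational curves on $S$, and preserves intersection numbers. Hence a smooth rational curve $C_2$ with $(C_2, \tau^* H) = 1$ would give rise to $\tau(C_2)$, a smooth rational curve with $(\tau(C_2), H) = (C_2, \tau^* H) = 1$, contradicting Claim \ref{line}. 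As a sanity check one can compute directly $(5L - H, -L + 2H) = 37$ and $(5L - H, 9L - 2H) = 3$, neither of which equals $1$.

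With Claim \ref{secondcondition} in hand, all three hypotheses of Theorem \ref{geometric} are satisfied, and that theorem delivers both $|\mathrm{Aut}(S)| < \infty$ and $|\mathrm{Aut}(S^{[2]})| = \infty$, completing the proof of Theorem \ref{main}(1). There is no real obstacle here: the work has all been done in the preceding claims, and this final step is essentially a bookkeeping assembly of Claims \ref{involution}, \ref{firstcondition}, \ref{veryampleh}, and \ref{line}.
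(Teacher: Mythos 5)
Your proposal is correct and follows essentially the same route as the paper: the paper's own proof is a one-line assembly citing $H_1 \neq H_2$, $(H,H)=4$, $\tau \in {\rm Aut}\,(S)$, and Claims \ref{veryampleh} and \ref{line}, which is exactly what you spell out (your transport-via-$\tau$ argument for $k=2$, confirmed by the direct computations $(5L-H,-L+2H)=37$ and $(5L-H,9L-2H)=3$, is the intended use of $\tau \in {\rm Aut}\,(S)$). No gaps.
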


\begin{proof}
As $H_1 \not= H_2$ in ${\rm NS}\, (S)$, $(H,H) = 4$ and $\tau \in {\rm Aut}\, (S)$, the result follows from Claims \ref{veryampleh} and \ref{line}. 
\end{proof}

Theorem \ref{main} (1) now follows from Theorem \ref{geometric} and Claims \ref{firstcondition} and \ref{secondcondition}. 

\section{Some properties of the product of K3 surfaces}

Let $V$ be a normal $\Q$-factorial projective variety. We denote by $\omega_V$ 
the dualizing sheaf of $V$. A divisor $D$ is called movable if there is a positive integer $k$ such that the complete linear system $|kD|$ has no fixed component. A divisor class $d \in {\rm NS}\, (V)$ is called movable if $d$ is represented by a movable divisor $D$.  
By ${\rm Mov}\, (V) \subset {\rm NS}\, (V) \otimes \R$, we denote the movable cone of $M$, i.e., the convex cone generated by the movable divisor classes. We denote by $\overline{\rm Mov}\, (V)$ the closure of the movable cone in the linear space ${\rm NS}\, (V) \otimes \R$. 

By ${\rm Amp}\, (V) \subset {\rm NS}\, (V) \otimes \R$, we denote the ample cone of $V$, i.e., the open convex cone generated by the ample divisor classes. We denote by $\overline{\rm Amp}\, (V)$ the closure of the ample cone in ${\rm NS}\, (V) \otimes \R$. The cone $\overline{\rm Amp}\, (V)$ is called the nef cone. 

Let $n$ be a positive integer, possibly $n=1$, and $S_i$ ($1 \le i \le n$) be mutually non-isomorphic projective K3 surfaces. Let $m_i \ge 1$ ($1 \le i \le n$) be positive integers (not necessarily distinct). We consider the following product: 
$$M := S_1^{m_i} \times S_2^{m_2} \times \cdots \times S_n^{m_n}\,\, .$$
$M$ is a smooth projective variety. We denote the tangent bundle of $M$ by 
$T_M$. 
We denote the $(i,j)$-th factor of $M$ by $S_{ij}$ and by
$$p_{ij} : M \to S_{ij}$$
the projection to the $(i,j)$-th factor. Here 
$$1 \le i \le n\,\, ,\,\, 1 \le j \le m_i\,\, .$$
Note that $S_{ij} = S_i$ as abstract varieties. We identify
$${\rm NS}\, (S_{ij}) = p_{ij}^*{\rm NS}\, (S_{ij}) \subset {\rm NS}\, (M)\,\, .$$
We denote by $\Sigma_k$ the symmetric group of $k$-letters. Then the group 
${\rm Aut}\, (S_i)^{m_i} \rtimes \Sigma_{m_i}$ naturally acts on $S_i^{m_i}$ 
and hence on $M$. More precisely, by the identification
$${\rm Aut}\, (S_i)^{m_i} \rtimes \Sigma_{m_i} = 
\prod_{k= 1}^{i-1} \{ id_{S_k^{m_k}} \} \times ({\rm Aut}\, (S_i)^{m_i} \rtimes \Sigma_{m_i}) \times \prod_{k= i+1}^{n} \{ id_{S_k^{m_k}} \}\,\, ,$$
we regard the group ${\rm Aut}\, (S_i)^{m_i} \rtimes \Sigma_{m_i}$ as a subgroup of ${\rm Aut}\, (M)$. 

In this section, we prove the following:

\begin{theorem}\label{elementary}

(1) $\omega_M \simeq \sO_M$, $H^0(M, T_M) = 0$, $H^1(M, \sO_M) = 0$ and ${\rm Pic}\, (M) = {\rm NS}\, (M)$ under the first Chern class map.

(2) Under the identification 
${\rm NS}\, (S_{ij}) = p_{ij}^*{\rm NS}\, (S_{ij})$ made above, 
$${\rm Pic}\, (M) = {\rm NS}\, (M) = \prod_{i=1}^{n} \prod_{j=1}^{m_i} {\rm NS}\, (S_{ij})\,\, .$$

(3) Under the identification (2),
$$\overline{\rm Mov}\, (M) = \overline{\rm Amp}\, (M) = \prod_{i=1}^{n} \prod_{j=1}^{m_i} \overline{{\rm Amp}}\, (S_{ij})\,\, .$$

(4) Under the inclusion ${\rm Aut}\, (S_i)^{m_i} \rtimes \Sigma_{m_i} \subset {\rm Aut}\, (M)$ explained above, 
$${\rm Bir}\, (M) = {\rm Aut}\, (M) = \prod_{i=1}^{n} ({\rm Aut}\, (S_i)^{m_i} \rtimes \Sigma_{m_i})\,\, .$$
\end{theorem}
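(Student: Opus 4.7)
For (1) and (2), both reduce to Künneth-type calculations. Iterating $\omega_{X \times Y} \simeq \omega_X \boxtimes \omega_Y$ together with $\omega_{S_i} \simeq \sO_{S_i}$ gives $\omega_M \simeq \sO_M$. The tangent bundle splits as $T_M = \bigoplus_{(i,j)} p_{ij}^* T_{S_{ij}}$, and the symplectic form identifies $T_{S_i} \simeq \Omega^1_{S_i}$, so $H^0(S_i, T_{S_i}) = h^{1,0}(S_i) = 0$ and $H^0(M, T_M) = 0$ by the projection formula. Künneth applied to structure sheaves gives $H^1(M, \sO_M) = \bigoplus H^1(S_{ij}, \sO_{S_{ij}}) = 0$, so ${\rm Pic}^0(M) = 0$ and ${\rm Pic}(M) = {\rm NS}(M)$. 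For (2), the vanishings ${\rm Pic}^0(S_i) = 0 = {\rm Alb}(S_i)$ kill the cross term in the standard Picard-of-product formula, and induction on the number of factors yields ${\rm Pic}(M) = \prod_{i,j} {\rm Pic}(S_{ij})$.

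For (3), the inclusion $\prod \overline{\rm Amp}(S_{ij}) \subset \overline{\rm Amp}(M)$ is immediate since sums of pullbacks of nef classes are nef; the reverse follows by restricting a nef class $D = \sum p_{ij}^* d_{ij}$ to a fiber of the projection to the complementary factors, which is a copy of $S_{ij}$ on which $D$ restricts to $d_{ij}$, forcing each $d_{ij}$ to be nef. Since ample implies movable, $\overline{\rm Amp}(M) \subset \overline{\rm Mov}(M)$. For the reverse, if $D \in \overline{\rm Mov}(M)$ had a component $d_{i_0 j_0}$ which is not nef, there would be an irreducible curve $C \subset S_{i_0 j_0}$ with $d_{i_0 j_0} \cdot C < 0$, and the family of curves $\tilde C_Q \subset M$ obtained by placing $C$ in the fiber of $p_{i_0 j_0}$ over varying points $Q \in \prod_{(k,l) \neq (i_0, j_0)} S_{kl}$ would cover $M$; for any $k$ such that $|kD|$ has no fixed component, a general member cannot contain the general $\tilde C_Q$, forcing $D \cdot \tilde C_Q \geq 0$. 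But $D \cdot \tilde C_Q = d_{i_0 j_0} \cdot C < 0$, a contradiction.

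For (4), the first step is to analyze the ${\rm Bir}(M)$-action on $H^0(M, \Omega^2_M)$. By Künneth together with $h^{1,0}(S_i) = 0$, we have $H^0(M, \Omega^2_M) = \bigoplus_{(i,j)} \C \sigma_{ij}$, where $\sigma_{ij} := p_{ij}^* \sigma_{S_i}$. At a general point the $\sigma_{ij}$ live on pairwise orthogonal $2$-dimensional summands of $T_M$, so a nonzero $\sum c_{ij} \sigma_{ij}$ has symplectic rank $2 \cdot \#\{(i,j) : c_{ij} \neq 0\}$; hence the rank-$2$ sections are precisely the scalar multiples of individual $\sigma_{ij}$. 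Any $f \in {\rm Bir}(M)$ acts by pullback on $H^0(\Omega^2)$ (holomorphic forms extend across codimension-$\ge 2$ exceptional loci on a smooth proper variety) and preserves rank at a general point, so $f^*$ permutes the lines $\C \sigma_{ij}$ via some permutation $\pi$.

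The foliation $\sF_{ij} := \ker \sigma_{ij}$ has as leaves the fibers of $p_{ij}$, parametrized by $S_{ij}$. On the dense open where $f$ is biregular, $f$ sends $\sF_{\pi^{-1}(i,j)}$ to $\sF_{ij}$ and thus descends to a birational map $\bar f_{(i,j)} : S_{\pi^{-1}(i,j)} \dashrightarrow S_{ij}$ between parameter spaces, which must be an isomorphism since K3 surfaces are minimal (so ${\rm Bir}(S) = {\rm Aut}(S)$ and birational K3 surfaces are isomorphic). Mutual non-isomorphism of the $S_i$ then forces $\pi$ to preserve the $i$-grouping, so $\pi \in \prod_i \Sigma_{m_i}$; after composing $f$ with the corresponding element of $\prod_i \Sigma_{m_i} \subset {\rm Aut}(M)$, we may assume $\pi = {\rm id}$, so $f$ preserves every $\sF_{ij}$ and $p_{ij} \circ f = \bar f_{ij} \circ p_{ij}$ for some $\bar f_{ij} \in {\rm Aut}(S_{ij})$. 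Consequently $f(P) = (\bar f_{ij}(P_{ij}))_{(i,j)}$ is biregular of the desired product form, which simultaneously yields ${\rm Bir}(M) = {\rm Aut}(M)$ and the structure as $\prod_i ({\rm Aut}(S_i)^{m_i} \rtimes \Sigma_{m_i})$. The main obstacle is the foliation-descent step: one must carefully pass from the $f^*$-action on rank-$2$ forms (which is purely cohomological) to a genuine identification of the foliations $\sF_{ij}$ on a dense open, and thence to a factorization of $f$ through the individual projections $p_{ij}$.
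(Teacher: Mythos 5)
Your proposal is correct in substance, but for the two nontrivial parts it takes a genuinely different route from the paper. Parts (1), (2) and the nef-cone equality in (3) coincide with the paper's argument (K\"unneth plus restriction to fibers via sections of the projections). For $\overline{\rm Mov}\,(M)=\overline{\rm Amp}\,(M)$ the paper classifies all birational contractions of $M$ using the nef-cone decomposition, concludes that $M$ admits no small projective contraction, and then quotes Kawamata's theorem (\cite[Thm. 5.7, Rmk. 5.9]{Ka88}) for smooth varieties with trivial canonical sheaf; your argument is more elementary, pairing a movable class directly against the family of curves $C\times\{Q\}$. One slip there: this family does not cover $M$ but only the divisor $C\times\prod_{(k,l)\neq(i_0,j_0)}S_{kl}$ --- which is in fact exactly what you need, since a linear system with no fixed component cannot contain that divisor in its base locus, so the general member misses the general $\tilde C_Q$ and the conclusion $d_{i_0j_0}\cdot C\ge 0$ stands. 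For (4) the divergence is more substantial: the paper first gets ${\rm Bir}\,(M)={\rm Aut}\,(M)$ from the absence of flops via Kawamata's ``flops connect minimal models'' (\cite{Ka08}), and then determines ${\rm Aut}\,(M)$ by induction on the number of factors, showing every fibration onto a K3 surface is one of the projections $p_{ij}$ and using discreteness of ${\rm Aut}\,(N)$ to make the second coordinate independent of the first; you instead exploit the holomorphic symplectic structure, stratifying $H^0(M,\Omega^2_M)$ by rank to show $f^*$ permutes the lines $\C\sigma_{ij}$, and descending along the foliations $\ker\sigma_{ij}$ to the leaf spaces $S_{ij}$. Your route handles ${\rm Bir}$ and the product decomposition in one stroke, with no induction and no appeal to \cite{Ka08}, at the cost of the foliation-descent step you rightly flag: it requires checking that the image under $f$ of a general fiber of $p_{\pi^{-1}(i,j)}$ (intersected with the domain of definition, which stays connected since fibers are irreducible) lands in a single fiber of $p_{ij}$, and then that a birational map between minimal K3 surfaces is biregular. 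That step is fillable, so both proofs are sound; the paper's is shorter given its black boxes, yours is more self-contained and structural.
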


\begin{proof}
The first assertion of (1) follows from the fact that $\omega_{S_i} \simeq \sO_{S_i}$. The second (resp. the third) assertion of (1) follows from the fact that $h^0(T_{S_i}) = 0$ (resp. $h^1(\sO_{S_i}) = 0$) and the K\"unneth formula. The last assertion then follows from the exponential sequence. 

Let us prove (2). The first equality of (2) follows from (1). As $H^1(S_{ij}, \Z) = 0$, we have 
$$H^2(M, \Z) = \prod_{i=1}^{n} \prod_{j=1}^{m_i} p_{ij}^{*} H^2(S_{ij}, \Z)\,\, ,$$
$$H^{1,1}(M, \C) = \prod_{i=1}^{n} \prod_{j=1}^{m_i} p_{ij}^{*} H^{1,1} (S_{ij}, \C)$$
by the K\"unneth formula. Thus the second equality follows from the Lefschetz $(1,1)$-theorem. 

Let us prove (3). Choose points $Q_{ij} \in S_{ij}$ ($1 \le i \le n$, $1 \le j \le m_i$). Let 
$$\iota_{ij} : S_{ij} \to M$$ 
be the inclusion defined by
$$x \mapsto (Q_{11}, Q_{12}, \cdots , Q_{i(j-1)}, x, Q_{i(j+1)}, 
\cdots , Q_{nm_n})\,\, .$$ Note that $p_{ij} \circ \iota_{ij} = id_{S_{ij}}$. 
Let $h \in {\rm NS}\, (M)$. Under the equality of the N\'eron-Severi groups in (2), we write 
$$h = (h_{11}, h_{12}, \cdots , h_{ij}, \cdots , h_{nm_n})\,\, .$$
Here $h_{ij} \in {\rm NS}\, (S_{ij})$. Then 
$h = \sum_{i, j}p_{ij}^*h_{ij}$ and $h_{ij} = \iota_{ij}^{*}h$. As the pullback of the nef class is nef, the last equality in (3) follows from these two equalities. 

By the last equality in (3), every projective birational contraction of $M$ is of the form 
$$\mu_{11} \times \mu_{12} \times \cdots \mu_{ij} \cdots \times \mu_{nm_n} : M \to V_{11} \times V_{12} \times \cdots \times V_{ij} \times \cdots \times V_{nm_n}\,\, ,$$
where $\mu_{ij} : S_{ij} \to V_{ij}$ ($1 \le i \le n$, $1 \le j \le m_i$) are birational projective contractions, in which some of $\mu_{ij}$ are possibly isomorphisms. Thus, $M$ has no small projective contraction. As $M$ is a smooth projective variety with $\omega_M \simeq \sO_M$, the first equality of (3) then follows from  \cite[Thm. 5.7, Rmk. 5.9]{Ka88}.

Let us prove (4).  As $M$ has no small projective contraction, $M$ has no flop, either. 
Then, by the fundamental result due to Kawamata (\cite{Ka08}), it follows that
${\rm Bir}\, (M) = {\rm Aut}\, (M)$. Here we also used the fact that $M$ is a smooth projective variety with $\omega_M \simeq \sO_M$ in order to apply \cite{Ka08}. 

It remains to prove the last equality in (4). It is clear that the group in the right hand side is a subgroup of ${\rm Aut}\, (M)$. We shall prove the opposite inclusion by considering special fibrations on $M$. We proceed by the induction on $k = \sum m_{ij}$. The result is clear if $k=1$. 

We call a surjective morphism $\varphi : M \to B$ a fibration if $B$ is a normal projective variety and the fibers of $f$ are connected. Note that a fibration is given by the morphism $\Phi_{|L|}$ associated to some complete, free linear system $|L|$. Indeed, $\varphi = \Phi_{|\varphi^* H|}$ for any very ample divisor $H$ on $B$, as $H^0(B, H) \simeq H^0(M, \varphi^*H)$ under $\varphi^*$, by the projection formula and the connectedness of fibers. Two fibrations $\varphi : M \to B$ and $(\varphi)' : M \to B'$ are called isomorphic if there is an isomorphism $\psi : B \to B'$ such that $\psi \circ \varphi = (\varphi)'$. 

Let $S$ be a projective K3 surface. Then a fibration on $S$ is either an isomorphism, a birational contraction onto a K3 surface with Du Val singular points, a fibration $S \to \BP^1$ whose general fiber is a smooth curve of genus one, or a surjective morphism to a point, ${\rm Spec}\, \C$. These fibrations are given respectively by sufficiently positive multiples of an ample divisor, a non-ample nef, big dvisor, a non-zero nef divisor with self-intersection number $0$ and zero divisor. Therefore, by (3), a fibration $\varphi : M \to B$ is isomorphic to one of $p_{ij} : M \to S_{ij}$ if $B$ is a smooth K3 surface.  

Let $f \in {\rm Aut}\, (M)$. Then $p_{11} \circ f : M \to S_{11}$ is a fibration. As $S_{11}$ is a smooth projective K3 surface, this fibration is isomorphic to one of $p_{ij} : M \to S_{ij}$. As $S_i \not\simeq S_1$ for $i \ge 2$ by our assumption, it follows that $i =1$. That is, there is $j$ ($1 \le j \le m_1$) such that $p_{11} \circ f : M \to S_{11}$ is isomorphic to $p_{1j} : M \to S_{1j}$. 
Choose an isomorphism 
$$g : S_{1j} \to S_{11}$$ 
such that 
$$g \circ p_{1j} = p_{11} \circ f\,\, .$$
Then $g \in {\rm Aut}\, (S_1)$, as $S_1 = S_{1j}$. 
Let $\sigma \in {\rm Aut}\, (S_1^{m_1}) \subset {\rm Aut}\, (M)$ be the automorphism defined by the permutation $(1, j) \in \Sigma_{m_1}$. By the definition, $p_{1j} \circ \sigma = p_{11}$. Then 
$$p_{11} \circ (f \circ \sigma) = (p_{11} \circ f) \circ \sigma = (g \circ p_{1j}) \circ \sigma = g \circ (p_{1j} \circ \sigma) = g \circ p_{11}\,\, .$$
Hence $f \circ \sigma \in {\rm Aut}\, (M)$ and $g \in {\rm Aut}\, (S_1)$ 
commute with the projection $p_{11}$. This means that $f \circ \sigma$ is of the form:$$M  = S_{11} \times N \ni (x, y) \mapsto (g(x), h_x(y)) \in S_{11} \times N = M\,\, .$$
Here 
$$N = \prod_{j=2}^{m_1}S_{1j} \times \prod_{i=2}^{n} \prod_{j=1}^{m_j} S_{ij}\,\, ,$$ 
$x \in S_{11}$ and $y \in N$. 
Then the correspondence $x \mapsto h_x(*)$ defines an analytic morphism 
$$\tau : S_{11} \to {\rm Aut}\, (N)\,\, ;\,\, x \mapsto h_x(*)\,\, .$$
As $H^0(N, T_N) = 0$ by (1), ${\rm Aut}\, (N)$ is a descrete group. 
Hence $\tau$ 
is a constant map. This means that $f \circ \sigma$ is of the form:
$$M  = S_{11} \times N \ni (x, y) \mapsto (g(x), h(y)) 
\in S_{11} \times N = M\,\, ,$$
for some automorphism $h \in {\rm Aut}\, (N)$, being independent of $x \in S_{11}$. 
Now, we can apply the assumption of the induction for $N$ and $h \in {\rm Aut}\, (N)$ and obtain the desired inclusion. 
\end{proof}

\section{An application for MDS and proof of Theorem \ref{main} (2)}

In this section we shall prove Theorem \ref{main} (2). We prove it  in a slightly generalized form Theorem \ref{mds}.

We first recall the notion of Mori dream space (MDS) from \cite{HK00}. 

Let $V$ be a normal $\Q$-factorial projective variety. A divisor $D$ on $V$ 
is called semi-ample if $|mD|$ is free for some positive integer $m$. We call a birational map $f : V \cdots\to V'$
a small $\Q$-factorial modification if $V'$ is a normal $\Q$-factorial projective variety and $f$ is isomorphic in codimension one, i.e., there are Zariski closed subsets $B \subset V$ and $B' \subset V'$ of codimension $\ge 2$, possibly empty, such that 
$$f |V \setminus B : V \setminus B \to V' \setminus B'$$
is an isomorphism.  
\begin{definition}\label{md}
Let $V$ be a projective variety. $V$ is called a MDS if the following three conditions are satisfied:

(1) $V$ is normal $\Q$-factorial and ${\rm Pic}\, (V) \otimes \Q = {\rm NS}\, (V) \otimes \Q$ under the cycle map.

(2) The nef cone $\overline{{\rm Amp}}\, (V)$ is generated by finitely many semi-ample divisor classes as a convex cone. 

(3) There are finitely many small $\Q$-factorial modifications $f_i : V \cdots\to V_i$ ($1 \le i \le n$, possibly $n=1$) such that $V_i$ satisfy (1) and (2) 
and
$${\rm Mov}\, (V) = \cup_{i=1}^{n} f_i^* \overline{{\rm Amp}}\, (V_i)\,\, .$$
\end{definition} 

The next theorem is the main result of this section:

\begin{theorem}\label{mds}
Let $S$ be a K3 surface such that $\vert {\rm Aut}\, (S) \vert < \infty$ and 
$\vert {\rm Aut}\, (S^{[n]}) \vert = \infty$. Then $S^{[n]}$ is not a MDS but $S^{(n)}$ is a MDS. In particular, the Hilbert-Chow morphism
$$\nu : S^{[n]} \rightarrow S^{(n)}$$
is a crepant projective resolution, which is also extremal in the sense of log minimal model program, but does not preserve MDS property. 
\end{theorem}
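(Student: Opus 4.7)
The plan is as follows. The Hilbert--Chow morphism $\nu$ is crepant because $S^{(n)} = S^n/\Sigma_n$ is $\Q$-factorial (finite quotient of a smooth variety), the branch locus of the quotient $q : S^n \to S^{(n)}$ is the big diagonal of codimension two, hence $K_{S^{(n)}} = 0 = K_{S^{[n]}}$; and it is extremal in the log MMP sense since $\rho(S^{[n]}) - \rho(S^{(n)}) = 1$, so $\nu$ contracts a single extremal ray (spanned by a ruling of the exceptional divisor $E$).

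To see that $S^{[n]}$ is not a MDS, I would invoke the standard fact that the natural map ${\rm Aut}\, (X) \to {\rm GL}({\rm NS}\, (X))$ has finite kernel for any projective hyperk\"ahler manifold $X$ (elements of the kernel preserve an ample class and hence lie in a finite polarized automorphism group). Hence $\vert {\rm Aut}\, (S^{[n]}) \vert = \infty$ yields an infinite image in ${\rm GL}({\rm NS}\, (S^{[n]}))$. Since this image preserves the nef cone, and since the stabilizer of a full-dimensional rational polyhedral cone in a discrete subgroup of ${\rm GL}({\rm NS}\, (S^{[n]}))$ is finite, $\overline{{\rm Amp}}\, (S^{[n]})$ cannot be rational polyhedral, so condition (2) of Definition \ref{md} fails and $S^{[n]}$ is not a MDS.

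For the MDS property of $S^{(n)}$, I would argue in three steps. First, $S$ itself is a MDS: Sterk's theorem applied to $\vert {\rm Aut}\, (S) \vert < \infty$ shows $\overline{{\rm Amp}}\, (S)$ is rational polyhedral; the basepoint-free theorem with $K_S = 0$ shows every nef class is semi-ample; and $S$ is a surface, so $\overline{{\rm Mov}}\, (S) = \overline{{\rm Amp}}\, (S)$ with no small modifications. Second, $S^n$ is a MDS: parts (2)--(3) of Theorem \ref{elementary} (which do not use distinctness of the factors in their proofs) identify $\overline{{\rm Mov}}\, (S^n) = \overline{{\rm Amp}}\, (S^n)$ with $\prod \overline{{\rm Amp}}\, (S)$, inheriting rational polyhedrality and generation by semi-ample classes. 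Third, $S^{(n)}$ is a MDS: the finite map $q$ identifies ${\rm NS}\, (S^{(n)}) \otimes \Q$ with the diagonal copy of ${\rm NS}\, (S) \otimes \Q$ inside $({\rm NS}\, (S^n) \otimes \Q)^{\Sigma_n}$; ampleness, nefness and semi-ampleness are preserved under the finite surjection $q$, so the nef cone of $S^{(n)}$ is rational polyhedral and generated by semi-ample classes; and $S^{(n)}$ inherits from $S^n$ the equality of movable and nef cones together with the absence of small modifications.

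The main obstacle I anticipate lies in the final step: verifying carefully that $\Sigma_n$-invariant semi-ample classes on $S^n$ descend to semi-ample classes on the singular variety $S^{(n)}$, and that $S^{(n)}$ truly admits no small $\Q$-factorial modifications arising from its singular locus. A cleaner alternative is to identify the Cox ring ${\rm Cox}(S^{(n)})$ with ${\rm Cox}(S^n)^{\Sigma_n}$ and apply the Hilbert--Noether finiteness theorem to transfer finite generation from $S^n$ to $S^{(n)}$.
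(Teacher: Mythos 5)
Your overall architecture matches the paper's proof quite closely: the crepant extremal resolution statement is Beauville's (the paper simply cites \cite[Prop.~5]{Be84}, and your direct argument via $\Q$-factoriality of the quotient, triviality of $K$ on both sides because $S^n \to S^{(n)}$ is \'etale in codimension one, and relative Picard number one is a correct unwinding of it); your argument that $S^{[2]}$-type infinite automorphism groups force the nef cone to be non-polyhedral is essentially the paper's Claim~\ref{hilb} read contrapositively (the paper builds the invariant ample class $H=\sum_k e_k$ from the primitive generators of the extremal rays and invokes \cite[Prop.~2.4]{Og12}, which is the same finiteness input as your ``finite kernel on ${\rm NS}$'' fact); and the treatment of $S^n$ via Sterk--Kawamata rational polyhedrality plus log abundance in dimension two plus Theorem~\ref{elementary} is identical to the paper's Proposition~\ref{product}. (A small simplification: Theorem~\ref{elementary} already allows repeated factors $S_1^{m_1}$, so you do not need to argue separately that its proof avoids the distinctness hypothesis.)

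The genuine gap is exactly where you flag it: the descent of the MDS property from $S^n$ to $S^{(n)}=S^n/\Sigma_n$. Your direct approach would require you to prove that $\Sigma_n$-invariant semi-ample classes descend, that $\overline{{\rm Mov}}(S^{(n)})=\overline{{\rm Amp}}(S^{(n)})$, and that no small $\Q$-factorial modifications of the singular quotient intervene --- none of which you carry out. The paper closes this in one line by citing Okawa's theorem \cite[Thm.~1.1]{Ok11} that the image of a Mori dream space under a surjective morphism onto a normal $\Q$-factorial projective variety is again a Mori dream space (alternatively B\"aker \cite[Thm.~1.1]{Ba11} on good quotients of MDS, which is precisely the ``Cox ring of the quotient'' statement you gesture at). Your proposed identification ${\rm Cox}(S^{(n)})\simeq{\rm Cox}(S^n)^{\Sigma_n}$ is also not literally correct: $\Sigma_n$ permutes the graded pieces of ${\rm Cox}(S^n)$, and ${\rm Cox}(S^{(n)})$ is only the Veronese-type subalgebra of the invariant ring supported on the $\Sigma_n$-invariant sublattice of ${\rm NS}(S^n)$; finite generation does still transfer to such a subalgebra, but that verification is the content of B\"aker's theorem rather than a direct consequence of Hilbert--Noether. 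So the proposal is sound in outline but needs the citation of \cite{Ok11} or \cite{Ba11} (or a complete reworking of the descent) to be a proof.
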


\begin{proof}
The fact that $\nu$ is a projective crepant extremal resolution is proved by Beauville (\cite[Prop. 5]{Be84}) based on a result of Forgaty. 

\begin{claim}\label{hilb} $S^{[n]}$ is not a MDS. 
More strongly, $\overline{{\rm Amp}}\, (S^{[n]})$ is not a finite rational polyhedral cone.
\end{claim}

\begin{proof}
We show the contraposition, i.e., assuming to the contrary that $\overline{{\rm Amp}}\, (S^{[n]})$ is a finite rational polyhedral cone, we prove $\vert {\rm Aut}\, (S^{[n]}) \vert < \infty$. 

By the assumption, there are only finitely many $1$-dimensional rays of the boundary of $\overline{{\rm Amp}}\, (S^{[n]})$. We denote all such rays by  $L_k$ ($1 \le k \le m$). Choose then the primitive integral generator $e_k$ of $L_k$. This is possible, as $L_k$ is defined over $\Q$, and $e_k$ is unique in each $L_k$. Consider the class
$$H := \sum_{k=1}^{m} e_k \in {\rm NS}\, (S^{[n]}) \simeq {\rm Pic}\, (S^{[n]}) \,\, .$$
As $\overline{{\rm Amp}}\, (S^{[n]})$ is generated by $e_k$ ($1 \le k \le m$) as a convex cone and each $e_k$ is nef, it follows from Kleiman's criterion that $H$ 
is an ample class. By the definition, the set $\{ e_k\}_{k=1}^{m}$ 
is stable under ${\rm Aut}\, (S^{[n]})$. Hence $H$ is invariant under ${\rm Aut}\, (S^{[n]})$ as well. Hence $\vert {\rm Aut}\, (S^{[n]}) \vert < \infty$ by 
\cite[Prop. 2.4]{Og12}. In order to apply \cite[Prop. 2.4]{Og12},  we also used the fact that $H^0(S^{[n]}, T_{S^{[n]}}) = 0$. 
\end{proof}
Let $S_k$ ($1 \le k \le n$) be projective K3 surfaces, possibly some of them are isomorphic, such that $\vert {\rm Aut}\,(S_k) \vert < \infty$ for each $k$. Set 
$$M := S_1 \times S_2 \times \cdots \times S_n\,\, .$$ 
\begin{proposition}\label{product}  
$M$ is a MDS. More strongly, $M$ satisfies that ${\rm Pic}\, (M) \simeq {\rm NS}\, (M)$, $\overline{{\rm Mov}}\, (M) = \overline{{\rm Amp}}\, (M)$, $\overline{{\rm Amp}}\, (M)$ is a finite rational polyhedral cone and any nef divisor of $M$ is semi-ample. 
\end{proposition}

\begin{proof} 
By Theorem \ref{elementary}, ${\rm Pic}\, (M) \simeq {\rm NS}\, (M)$, $\overline{{\rm Mov}}\, (M) = \overline{{\rm Amp}}\, (M)$. 

By $\vert {\rm Aut}\, (S_k) \vert < \infty$, the nef cone $\overline{{\rm Amp}}\, (S_k)$ is a finite rational polyhedral cone. This follows from the solution of the cone conjecture for projective K3 surfaces (\cite[Lem. 2.4]{St85}, \cite[Thm. 2.1]{Ka97}; see also \cite[Thm. 3.3, Cor. 5.1]{To10}, \cite[Thm. 2.7]{AHL10} for closely related results). Note that every nef divisor $D$ on a projective K3 surface $S$ is effective. Indeed, this follows from the Riemann-Roch formula. As $\omega_S \simeq \sO_S$, every nef divisor on $S$ is then semi-ample by the log abundance theorem in dimension $2$ (\cite{Fj84}). 

Hence, by Theorem \ref{elementary}, $\overline{{\rm Amp}}\, (M)$ is a finite rational polyhedral cone and every nef divisor on $M$ is semi-ample. 
\end{proof}

\begin{claim}\label{chow} $S^{(n)}$ is a MDS. 
\end{claim}

\begin{proof}
By Proposition \ref{product}, $S^{n}$ is a MDS. We have a surjective morphism 
$$S^{n} \rightarrow S^{(n)}\,\, ,\,\, (P_1, P_2, \cdots , P_n) \mapsto P_1+P_2 + \cdots + P_n\,\, .$$
Note that $S^{(n)} = S^n/\Sigma_n$ is a normal ${\mathbf Q}$-factorial projective variety. As $S^n$ is a MDS by Proposition \ref{product}, so is $S^{(n)}$ by a fundamental result of Okawa \cite[Thm. 1.1]{Ok11} (one may also apply \cite[Thm. 1.1]{Ba11} in our situation). 
\end{proof}

Claims \ref{hilb} and \ref{chow} complete the proof of Theorem \ref{mds}.
\end{proof}

Theorem \ref{main} (2) now follows from Theorem \ref{mds} and Theorem \ref{main} (1).

\end{document}